\title[Irregular free boundary points]{Classification of irregular free boundary points\\
for non-divergence type equations\\
with discontinuous coefficients}
\author{Serena Dipierro}
\address[Serena Dipierro]{Dipartimento di Matematica, Universit\`a degli studi di Milano,
Via Saldini 50, 20133 Milan, Italy}
\email{serena.dipierro@unimi.it}
\author{Aram Karakhanyan}
\address[Aram Karakhanyan]{Maxwell Institute for
Mathematical Sciences and School of Mathematics, University of
Edinburgh, James Clerk Maxwell Building, Peter Guthrie Tait Road,
Edinburgh EH9 3FD, United Kingdom}
\email{aram.karakhanyan@ed.ac.uk}
\author{Enrico Valdinoci}
\address[Enrico Valdinoci]{School of Mathematics and Statistics,
University of Melbourne,
813 Swanston Street, Parkville VIC 3010, Australia, 
and Istituto di Matematica Applicata e Tecnologie Informatiche,
Consiglio Nazionale delle Ricerche,
Via Ferrata 1, 27100 Pavia, Italy,
and Dipartimento di Matematica, Universit\`a degli studi di Milano,
Via Saldini 50, 20133 Milan, Italy}
\email{enrico@mat.uniroma3.it}
\theoremstyle{plain}
\newtheorem{theorem}{Theorem}[section]
\newtheorem{lemma}[theorem]{Lemma}
\newtheorem{prop}[theorem]{Proposition}
\theoremstyle{definition}
\newtheorem{remark}[theorem]{Remark}
\renewcommand{\tilde}{\widetilde}
\def\R{\mathbb R}
\def\Z{\mathbb Z}
\def\supp{\hbox{supp}}
\def\e{{\varepsilon}}
\def\ep{\epsilon}
\newcommand{\p}[1]{\partial_{#1}}
\newcommand{\fb}[1]{\partial\{{#1>0}\}}
\renewcommand\th{\theta}
\renewcommand\L{\mathcal L}
\renewcommand\S{\mathbb S}
\newcommand\na{\nabla}
\newcommand{\be}{\begin{equation}}
\newcommand{\ee}{\end{equation}}
\renewcommand{\div}{\operatorname{div}}
\begin{document}

\begin{abstract}
We provide an integral estimate for a non-divergence 
(non-variational) form second order 
elliptic equation $a_{ij}u_{ij}=u^p$, $u\ge 0$, $p\in[0, 1)$, 
with bounded discontinuous  coefficients $a_{ij}$ having small BMO norm. 
We consider the simplest discontinuity of the 
form~$x\otimes x|x|^{-2}$ at the origin. As an application 
we show that the free boundary corresponding to the 
obstacle problem (i.e. when~$p=0$) 
cannot be smooth at the points of discontinuity of~$a_{ij}(x)$. 

To implement our construction,
an integral estimate and a scale invariance will
provide the homogeneity of the blow-up sequences,
which then can be classified using ODE arguments. 
\end{abstract}

\subjclass[2010]{35R35, 35B65}
\keywords{Free boundary, blow-up sequences, non-divergence
operators, monotonicity formulae.}

\maketitle

\section{Introduction}\label{sec:intro}

In this paper we consider the free boundary problem 
\be\label{pde-000}
\L v:=a_{ij}v_{ij}=v^p\ \; \hbox{in}\ B_1, \quad
v\ge 0,
\ee
with~$p\in(0,1)$. We will also deal with the case~$p=0$
using the notation that identifies~$v$ to the power zero with
the characteristic function~$\chi_{\{v>0\}}$.

Problems of this type often arise in real world phenomena.
For instance,
in the study of the spread of biological  populations one studies the problem
\begin{equation}\label{pde-0}
\div(a\na( u^m))+f(x)u+b\cdot \na (u^m)=0
\end{equation}
where $u:\R^n\to[0,+\infty)$ represents the density of the population,
$a:\R^n\to {\rm Mat}(n\times n)$ and~$b:\R^n\to\R^n$
represents a drift term.
Here, $m>1$, $a(x)$ is a positive definite matrix (with entries~$a_{ij}(x)$)
and~$f:\R^n\to\R$ takes into account
the 
influence of the environment on the population, see~\cite{S83}.

It is convenient to reformulate the problem in terms of the auxiliary function~$v:=u^m$ 
and write~\eqref{pde-0} as 
\[\div(a\na v)+f(x)v^{\frac1m}+b\cdot \na v=0.\]
Notice that this boils down to the equation in~\eqref{pde-000}
when~$m=1/p$, $f\equiv-1$ and~$b=(b_1,\dots,b_n)$ with~$b_i=\partial_ja_{ij}$.

The case in which~$a_{ij}$ is the identity matrix reduces of course to that of the Laplacian,
and, in general, a non-constant~$a_{ij}$ models a
heterogeneous medium in which the speed of diffusion
is different from one point to another.

Moreover, equations in non-divergence form
arise naturally from probabilistic considerations,
for instance, as the infinitesimal generators of anisotropic random
walks, see e.g. Section~2.1.3 in~\cite{C08}.

Furthermore, when~$a$ in~\eqref{pde-000} is the identity matrix,
the problem is related to the singular one in~\cite{AP86},
and as~$p\to0$ it recovers the exemplary
free boundary problem in~\cite{C77}. 
\medskip

One of the main distinctions in the field of partial differential
equations consists in the difference between equations
``in divergence form'' and those ``in
non-divergence form''.
While the first ones naturally admit a variational formulation
and can be dealt with by energy methods, the second ones
usually require different -- and perhaps more sophisticated -- techniques
(see e.g.~\cite{T82} for a detailed discussion), often in combination with viscosity methods.

We refer to~\cite{K07, C08} and the references therein
for throughout 
presentations of similarities and differences between
equations in divergence and non-divergence form.

A similar distinction between
divergence and non-divergence structure
occurs in the field of free boundary
problems. As a matter of fact,
free boundary problems whose partial differential
equation is in divergence form
often enjoy a special feature given by the so-called ``monotonicity
formulas'': namely, the energy functional, or a suitable
variational integral, possesses a natural monotonicity
property with respect to some geometric quantity (typically,
a functional defined on balls of radius~$r$ turns out to
be monotone in~$r$).

This type of monotonicity property is, in a sense, geometrically 
motivated, since
it may be seen somehow as an offspring of classical
monotonicity
formulas arising in the theory of minimal surfaces and geometric
flows. In addition, combined with the natural scaling of the problem,
a monotonicity formula is often very useful in proving
uniqueness of blow-up solutions, classification results
and regularity theorems.

Viceversa, problems which do not enjoy 
monotonicity formulas (or for which a monotonicity formula
is not known) may turn out to be considerably harder to deal with,
and proving (or disproving) a strong regularity theory
is a natural, important and often very challenging question
(see e.g.~\cite{CS-LIB, PSU12} for further discussions on monotonicity formulas).
\medskip

The study of free boundary
in discontinuous media is also a very active field
of research in itself, see in particular~\cite{T16}
for related problems involving
a fully nonlinear dead-core
problems, 
\cite{ALT16} for dead-core problems driven by the infinity Laplacian, and~\cite{PT16}
for cavity problems in
rough media. See also~\cite{Blan} 
for a case
in which the coefficients
belong to the space of vanishing mean oscillation.\medskip

Our objective in the present paper
is to study the behavior of the solution~$v$ of~\eqref{pde-000}
near the free boundary points~$x\in
\fb v$ at which the matrix~$a_{ij}(x)$ is discontinuous.  
A model example of this sort in~$2$D  is 
\be\label{pde-0BIS}
\Delta v+\e\left(\frac{x_1^2}{|x|^2}v_{22}-\frac{2x_1x_2}{|x|^2}v_{12}+\frac{x_2^2}{|x|^2}v_{11}\right)=v^p
\ee
where $\e$ is a small constant and $p\in[0,1)$
(here, we are using the standard
notation~$x=(x_1,x_2)\in\R^2$ and~$v_{ij}=\partial^2_{ij}v$).

One can also write equation~\eqref{pde-0BIS} in 
the equivalent form
\[
\div(a\na v)+b\cdot \na v=v^p
\]
where 
\begin{equation}\label{aijdef}
a(x):=
\begin{pmatrix}
1+\frac{\e x_2^2}{|x|^2} & -\frac{\e x_1x_2}{|x|^2}\\
-\frac{\e x_1x_2}{|x|^2}& 1 +\frac{\e x_1^2}{|x|^2}
\end{pmatrix}
\end{equation}
and 
\[b=(b^1, b^2), \quad b^j=-\sum_i\p i (a_{ij}), \quad  |b|\sim \frac1{|x|}.\]
We observe  that the quadratic form 
\[a_{ij}\xi_i\xi_j=|\xi|^2+\frac\e{|x|^2}\left((x_1\xi_2)^2+(x_2\xi_1)^2-2x_1x_2\xi_1\xi_2\right)=|\xi|^2+\frac\e{|x|^2}(x_1\xi_2-x_2\xi_1)^2\]
is positive definite and $a_{ij}$ are discontinuous at the origin.

More generally, we can assume that the diffusion matrix~$a$
has the form
\begin{equation}\label{PART}
a_{ij}(x)=h_{ij}(x)+b_{ij}(x)\end{equation}
where $h_{ij}$ is a homogeneous function of degree zero 
and for any point~$x_0\in\R^n$ we have that
$$|b_{ij}(x)-\delta_{ij}|\le \omega(|x-x_0|),$$ with 
\[\int_{0}^{\delta}\frac{\omega(t)}{t}dt<+\infty,\]
for some~$\delta>0$.
Roughly speaking, in~\eqref{PART}, the terms~$b_{ij}$
and~$h_{ij}$ represent the continuous and the discontinuous
parts of~$a_{ij}$, respectively.
\medskip

Throughout this paper we will assume that the operator satisfies the following 
conditions:
\begin{itemize}
\item[\bf (H1)] the entries of the matrix $a_{lm}$ 
are bounded measurable functions, and the matrix 
is uniformly elliptic,  i.e. there exist two positive 
constants $\lambda$ and~$\Lambda$ such that 
\[\lambda|\xi|^2\le a_{lm}(x)\xi_l\xi_m\le \Lambda|\xi|^2, 
\quad \forall x\in B_1,\]
\item[\bf (H2)] the coefficients $a_{lm}(x)$ have small BMO norm, namely
\[ \sup_{0<r\leq R}\sup_{x\in \R^n}\fint_{B_r(x)}\left|a_{lm}(y)-
\fint_{B_r(x)}a_{lm}\right|\,dy= \delta(R)<+\infty,\] 
where $\delta(R)>0$ is a small constant.
\item[\bf (H3)] the matrix $a_{ij}$ has at least one 
discontinuity at $x_0\in \R^n$ such that $a_{ij}(x)$ 
is rotational invariant at $x_0$ and homogeneous of degree zero. 
\end{itemize}
In this setting, the problem in~\eqref{pde-000}
admits a solution, as given by the following result:

\begin{theorem}\label{EX:TH}
Let $g\in W^{2,\infty}(B_1)\cap C(\overline{B_1})$, 
with~$g\ge 0$. Then, there exists  a
nonnegative function $v$ such 
that $v-g\in W^{2,q}(B_1)\cap W_0^{1, q}(B_1)$,
for some $1<q<+\infty$, and $v$ solves \eqref{pde-000}.
\end{theorem}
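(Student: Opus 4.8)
The plan is to construct the solution by a penalization/approximation scheme combined with the $W^{2,q}$ estimates available for non-divergence operators with small BMO coefficients. First I would regularize the right-hand side: for $p\in[0,1)$ the map $s\mapsto s^p$ is not Lipschitz near $s=0$ (and is merely the characteristic function when $p=0$), so I would replace $v^p$ by a bounded, Lipschitz, nondecreasing approximation $\beta_\epsilon(v)$ with $\beta_\epsilon(s)\to s^p$ (respectively $\to\chi_{\{s>0\}}$) as $\epsilon\to0$, and with $0\le\beta_\epsilon\le C$ uniformly. For each fixed $\epsilon$ I would solve the Dirichlet problem $\L v_\epsilon=\beta_\epsilon(v_\epsilon)$ in $B_1$ with boundary datum $g$; existence here follows from Schauder-type fixed point arguments (e.g. applying the solvability of the linear problem $\L w=f$ in $W^{2,q}$, which holds for coefficients with small BMO norm by the Chiarenza--Frasca--Longo theory, composed with the continuous Nemytskii operator $w\mapsto\beta_\epsilon(w)$, and invoking Schauder's theorem on a suitable ball in $W^{2,q}\cap C^0$, using compact embeddings for $q>n$).

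Next I would establish that $v_\epsilon\ge0$. Since $g\ge0$ and $\beta_\epsilon\ge0$, the function $w:=v_\epsilon-g$ satisfies $\L w=\beta_\epsilon(v_\epsilon)-\L g$, which does not immediately give a sign; the cleaner route is to test the equation for $v_\epsilon$ against $v_\epsilon^-$. More precisely, using $\beta_\epsilon(s)=0$ for $s\le0$ (arrange the approximation so), the set $\{v_\epsilon<0\}$ is a subsolution of the homogeneous equation $\L v_\epsilon=0$ there, with $v_\epsilon=0$ on its boundary portion inside $B_1$ and $v_\epsilon=g\ge0$ on $\partial B_1$; the maximum principle for $\L$ (valid under (H1)) then forces $v_\epsilon\ge0$. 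I would also record the uniform bound $0\le v_\epsilon\le M$ for $M$ depending only on $\|g\|_{L^\infty}$, $\sup\beta_\epsilon$, $\lambda$, $\Lambda$ and $n$, again by comparison with an explicit supersolution (a large constant plus a quadratic).

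Then comes the compactness step: from $\|\beta_\epsilon(v_\epsilon)\|_{L^\infty(B_1)}\le C$ and the interior/global $W^{2,q}$ estimate for $\L$ with small-BMO coefficients, I get $\|v_\epsilon\|_{W^{2,q}(B_1)}\le C$ uniformly in $\epsilon$, for every $q<\infty$ (the smallness of $\delta(R)$ in (H2) is exactly what licenses the Calder\'on--Zygmund estimate for all such $q$). Fixing some $q\in(n,\infty)$ and passing to a subsequence, $v_\epsilon\rightharpoonup v$ weakly in $W^{2,q}(B_1)$ and strongly in $C^{1,\alpha}(\overline{B_1})$; in particular $v_\epsilon\to v$ uniformly, so $v\ge0$ and $v-g\in W^{2,q}_0\cap W^{1,q}_0$. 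Passing to the limit in the equation: $a_{ij}(v_\epsilon)_{ij}\rightharpoonup a_{ij}v_{ij}$ weakly in $L^q$ (the $a_{ij}$ are bounded, so multiplication is continuous for weak $L^q$ convergence of second derivatives), and $\beta_\epsilon(v_\epsilon)\to v^p$ a.e.\ together with dominated convergence gives the right-hand side. The delicate point for $p=0$ is that $\chi_{\{s>0\}}$ is discontinuous at $s=0$, so one only obtains $\L v=\chi_{\{v>0\}}$ away from the free boundary and $0\le\L v\le1$ there; the standard resolution is to show $|\{0<v_\epsilon<t\}|\to0$ as $t\to0$ uniformly, or to note that $\L v=0$ a.e.\ on $\{v=0\}$ since $v\ge0$ attains an interior minimum there (so $v_{ij}=0$ a.e.\ on that set), which makes the identification $\L v=\chi_{\{v>0\}}$ hold a.e.

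The main obstacle I expect is the fixed-point/existence step at fixed $\epsilon$ done in a way that is uniform enough to survive the limit, and—more seriously—the identification of the limiting equation when $p=0$, i.e.\ ruling out that mass of the Laplacian concentrates on the free boundary. I would handle the latter by the observation above that $v_{ij}=0$ almost everywhere on $\{v=0\}$ for a nonnegative $W^{2,q}$ function (a standard consequence of $\nabla v=0$ a.e.\ and $D^2 v=0$ a.e.\ on a level set), which pins down $\L v=\chi_{\{v>0\}}$ a.e.\ in $B_1$; the uniform ellipticity (H1) and the small-BMO hypothesis (H2) are used only through the maximum principle and the $W^{2,q}$ a priori estimate, and hypothesis (H3) plays no role in this particular existence statement.
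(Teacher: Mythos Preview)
Your proposal is essentially correct, but the route differs from the paper's in two structural choices. The paper mollifies not only the nonlinearity but also the coefficients, setting $a_{ij}^\epsilon:=a_{ij}*\eta_\epsilon$; with smooth uniformly elliptic coefficients it then solves the penalized problem by the \emph{continuity method}: it introduces a parameter $t\in[0,1]$ in front of the penalization $\phi_\epsilon$, shows the set of admissible $t$ is open via the Fr\'echet derivative $DA_t h=a_{ij}^\epsilon h_{ij}-t\phi_\epsilon'(v)h$ (which is invertible by classical Schauder theory since $\phi_\epsilon'\ge0$), and closed via $C^{4,\alpha}$ compactness. Only at the very end does it invoke the Chiarenza--Frasca--Longo $W^{2,q}$ estimate to get bounds uniform in $\epsilon$ and pass to the limit. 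By contrast, you keep the rough coefficients throughout, use the $W^{2,q}$ solvability of the \emph{linear} problem $\L w=f$ directly, and obtain the approximate nonlinear solution by a Schauder fixed-point argument. Your approach is shorter and avoids the double approximation; the paper's buys the comfort of working with classical solutions at the approximate stage, which makes the openness/closedness argument routine Schauder theory rather than a compactness argument in $W^{2,q}$.

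Two minor remarks on your write-up. First, your claim ``$0\le\beta_\epsilon\le C$ uniformly'' is not literally true for $p>0$ since $s^p$ is unbounded; what you actually need (and later provide) is the a priori bound $0\le v_\epsilon\le\sup g$, which follows from $\L v_\epsilon\ge0$ and the maximum principle, and this is what makes $\beta_\epsilon(v_\epsilon)$ uniformly bounded and the Schauder fixed point go through on the ball $\{\|w\|_{C^0}\le\sup g\}$. Second, your treatment of the $p=0$ limit (using that $D^2 v=0$ a.e.\ on $\{v=0\}$ for a nonnegative $W^{2,q}$ function) is more explicit than the paper's, which simply defers that case to~\cite{Blan} and focuses on $p\in(0,1)$, where the continuity of $s\mapsto s^p$ at the origin makes the identification of the limit equation immediate.
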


{F}rom the technical point of view, concerning the assumptions
on the coefficients~$a_{ij}$,
we notice that the function $x_ix_j|x|^{-2}\not\in VMO$ for any $i$ and~$j$.
However, if $\e$ is sufficiently small then $\bf (H2)$ holds with $\delta(R)\le C\e$,
where $C$ is a dimensional constant. Consequently, we can apply the $W^{2,q}$ estimates 
from Theorem~4.4 in~\cite{VMO} to establish the existence and optimal growth of the solutions.
As a matter of fact, setting
\begin{equation}\label{C0}
\beta=\frac2{1-p},\end{equation}
we can bound the growth from the free boundary
according to the following result
(see also Theorem~2 in~\cite{T16}):

\begin{theorem}\label{prop-dyadic:TH}
Let $v\ge 0$ be a bounded
weak solution of \eqref{pde-000} in $B_1$. 
Then there exists a constant $M>0$, depending 
on~$\|v\|_{L^\infty(B_1)}$, such that,
for each $\bar x\in B_{\frac12}\cap \fb v$
and any~$x\in B_{\frac14}(\bar x)$, it holds that~$v(x)\le M\,|x-\bar x|^\beta$.
\end{theorem}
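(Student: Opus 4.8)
The plan is to prove the growth bound $v(x) \le M |x-\bar x|^\beta$ by a dyadic iteration argument, comparing the $L^\infty$ norm of $v$ on a ball of radius $2^{-k}$ with the norm on a ball of radius $2^{-(k+1)}$. First I would set $\bar x$ to be the origin (after translation; note that by $\bf (H3)$ the discontinuity point and the free boundary point need not coincide, but the estimate is purely local and the rescaled coefficients still satisfy $\bf (H1)$ and $\bf (H2)$ with the same constants, since the smallness of the BMO seminorm in $\bf (H2)$ is scale-invariant). Set $S_j := \sup_{B_{2^{-j}}} v$ and suppose, for contradiction or for the inductive step, that $S_j$ fails to decay like $2^{-j\beta}$.

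The core is a rescaling–compactness dichotomy. Define $v_j(x) := \frac{v(2^{-j}x)}{\max\{S_j, 2^{-j\beta}\}}$ on $B_1$, so that $\|v_j\|_{L^\infty(B_1)} \le 1$ and $v_j$ solves an equation of the same type, $a^{(j)}_{lm}(v_j)_{lm} = c_j v_j^p$ with $a^{(j)}_{lm}(x) = a_{lm}(2^{-j}x)$ and $c_j = 2^{-2j}\,(\max\{S_j, 2^{-j\beta}\})^{p-1} = 2^{-2j}\,(\max\{S_j,2^{-j\beta}\})^{-2/\beta}\,(\max\{S_j,2^{-j\beta}\})^{p-1+2/\beta}$. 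Using $\beta = 2/(1-p)$ one checks $2^{-2j}(2^{-j\beta})^{p-1} = 1$, so $0 \le c_j \le 1$ uniformly in $j$. Now, if the claimed bound fails along a subsequence, one has $S_{j}/2^{-j\beta}\to\infty$ while, crucially, the doubling-type control $S_{j}\le 2^{\beta} S_{j+1}$ would have to fail infinitely often; the standard device is to pick, for each $m$, the largest $j = j(m)$ for which $S_j > m\, 2^{-j\beta}$ and exploit maximality to get $S_{j} \le 2^{\beta} S_{j+1}$ on that scale, which makes the sequence $v_{j(m)}$ not degenerate to zero. Then apply the interior $W^{2,q}$ estimates of Theorem~4.4 in~\cite{VMO} (which is exactly what $\bf (H2)$ is tailored for, with $\delta(R)$ small) together with Sobolev embedding to get uniform $C^{1,\alpha}_{loc}$ bounds on $v_j$, pass to a locally uniform limit $v_\infty$, and observe that $v_\infty \ge 0$, $v_\infty(0) = 0$ (free boundary point), $v_\infty$ is not identically zero, and $a^\infty_{lm}(v_\infty)_{lm} = c_\infty v_\infty^p$ for some limiting elliptic $a^\infty_{lm}$ and $c_\infty \in [0,1]$.

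The contradiction comes from the optimal-growth/nondegeneracy balance at the limit: on one hand the normalization forces $\sup_{B_{1/2}} v_\infty = 1$ (or a fixed positive constant), but on the other hand the way $j(m)$ was chosen forces $v_\infty$ to vanish faster than any multiple of $|x|^\beta$ near $0$, i.e. $\sup_{B_r} v_\infty = o(r^\beta)$ as $r\to 0^+$; yet a nonnegative solution of $a^\infty_{lm}w_{lm} = c_\infty w^p$ with $c_\infty\le 1$ and $\|w\|_{L^\infty(B_1)}\le 1$ that vanishes at an interior point must itself satisfy $w(x)\le C|x|^\beta$ by the ordinary ($a$ merely bounded elliptic with small BMO) interior $W^{2,q}$ estimate applied once more at unit scale — in effect one runs the comparison $w \le$ (barrier of the form $C|x|^\beta$) using that $a^\infty_{lm}(C|x|^\beta)_{lm}$ can be made $\ge w^p$ for $C$ large, up to the rotational-invariance structure in $\bf (H3)$ which guarantees $|x|^\beta$ is a legitimate supersolution. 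This contradicts $o(r^\beta)$ decay together with $\sup_{B_{1/2}}v_\infty>0$, closing the argument.

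The main obstacle I anticipate is the construction and justification of the radial barrier $C|x|^\beta$ (or a small perturbation thereof) as a supersolution for operators with the discontinuous, rotationally invariant structure of $\bf (H3)$: one must verify that $a_{lm}(x)\,\partial^2_{lm}\!\big(|x|^\beta\big)$ stays below $\big(|x|^\beta\big)^p = |x|^{\beta p} = |x|^{\beta - 2}$ up to the right constant, and since $a_{lm}$ is homogeneous of degree zero the Hessian of $|x|^\beta$ is also homogeneous of degree $\beta-2$, so the inequality reduces to a pointwise spectral condition on the unit sphere — here the precise form $x\otimes x|x|^{-2}$ and ellipticity $\bf (H1)$ are what make it work, and getting uniform constants independent of the rescaling index $j$ is the delicate bookkeeping. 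A secondary technical point is ensuring that $\bf (H2)$ is genuinely preserved under the parabolic-type (here elliptic) rescaling $x\mapsto 2^{-j}x$; this is true because the BMO seminorm is invariant under dilations, but it must be stated carefully so that Theorem~4.4 in~\cite{VMO} can be invoked with a single $\delta(R)$ throughout.
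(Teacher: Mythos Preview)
Your overall strategy---dyadic rescaling, compactness via the $W^{2,q}$ estimates from~\cite{VMO}, and passage to a limit---is the same as the paper's, but the endgame is broken and you have missed the key simplification that makes the contradiction immediate.

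First, your stated contradiction does not close. You argue that the choice of $j(m)$ forces $\sup_{B_r} v_\infty = o(r^\beta)$, and then invoke that a nonnegative solution with $v_\infty(0)=0$ satisfies $v_\infty(x)\le C|x|^\beta$. These two statements are \emph{compatible}, not contradictory: $o(r^\beta)$ is a stronger form of $O(r^\beta)$. To get a contradiction along these lines you would need a \emph{lower} bound (nondegeneracy) $\sup_{B_r} v_\infty \ge c\,r^\beta$, and no such bound is available for nonnegative solutions of $a^\infty_{lm} w_{lm} = c_\infty w^p$ vanishing at the origin (indeed $w\equiv 0$ solves it). The barrier construction you flag as the ``main obstacle'' is therefore beside the point.

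Second---and this is what the paper exploits---in the bad regime the rescaled right-hand side actually vanishes. Along the subsequence one has $S_{j}\,2^{j\beta}\to\infty$; since $\beta(1-p)=2$, your own formula gives $c_j = 2^{-2j}\,S_j^{p-1} = (2^{j\beta}S_j)^{-(1-p)}\to 0$. Hence the limit $v_\infty$ solves the \emph{homogeneous linear} equation $A^0_{lm}(v_\infty)_{lm}=0$, not an equation with $c_\infty\in[0,1]$. The contradiction is then one line: $v_\infty\ge0$, $v_\infty(0)=0$, and the strong maximum principle for non-divergence uniformly elliptic operators with no zero-order term force $v_\infty\equiv 0$, contradicting the normalization $\sup_{B_{1/2}} v_\infty=1$. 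This is exactly how the paper closes the argument in Proposition~\ref{prop-dyadic}; no barrier, no comparison with $|x|^\beta$, and no use of~{\bf (H3)} are needed.

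A smaller technical slip: your ``largest $j$ with $S_j>m\,2^{-j\beta}$'' device, by maximality, yields $S_{j+1}\le m\,2^{-(j+1)\beta}$ and hence $S_j>2^\beta S_{j+1}$---the \emph{opposite} of the doubling inequality you claim. The paper avoids this by assuming directly the failure of the dichotomy $S(k+1)\le\max\{CM\,2^{-\beta k},\tfrac12 S(k)\}$, which simultaneously gives $S(k_i+1)>i\,M\,2^{-\beta k_i}$ (forcing $c_i\to 0$) and $S(k_i)\le 2\,S(k_i+1)$ (preventing degeneration of the rescaled sequence).
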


We remark that the problem in~\eqref{pde-000} has a natural
scale invariance: for this, it is useful to define
$$v_r(x):= \frac{v(x_0+rx)}{r^\beta}$$ with~$\beta$ as in~\eqref{C0}.
We notice indeed that $v_r$ is also a solution of \eqref{pde-000}.
We will show that, up to a subsequence,
these blow-up functions approach a blow-up limit.

We say that $v$ is non-degenerate at $x_0\in \fb v$ if there exists a sequence of positive 
numbers $r_k\to 0$ such that the corresponding blow-up limit is not identically zero.

A cornerstone of our analysis is a uniform integral estimate.
The result that we obtain is the following:

\begin{theorem}\label{PRE:thm-2D}
Let $v$ be a strong solution 
of \eqref{pde-000} in $B_1\subset\R^2$,
with~$a_{ij}$ as in~\eqref{aijdef}.
Assume that 
$0\in \fb v$ and $v$ is non-degenerate at $0$. Then 
\begin{equation} \label{098io1}
\int_{B_{1/2}} \left(\beta\, \frac{v(x)}{|x|^\beta}  
-\frac{\p r v(x)}{|x|^{\beta-1}}\right)^2\,\frac{dx}{|x|^2}
\le \tilde C,\end{equation}
for some~$\tilde C>0$ depending on~$\|v\|_{L^\infty(B_1)}$.
\end{theorem}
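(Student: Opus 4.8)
The quantity inside the integral in~\eqref{098io1} is, up to a sign, the radial derivative of the rescaled function $v(x)/|x|^\beta$, since
\[
\p r\!\left(\frac{v(x)}{|x|^\beta}\right)=\frac{\p r v(x)}{|x|^\beta}-\beta\,\frac{v(x)}{|x|^{\beta+1}}
=-\frac1{|x|}\left(\beta\,\frac{v(x)}{|x|^\beta}-\frac{\p r v(x)}{|x|^{\beta-1}}\right).
\]
So the estimate says precisely that the function $w(x):=v(x)|x|^{-\beta}$ has radial derivative in $L^2$ with respect to the measure $dx$ on $B_{1/2}$. The plan is to prove this by establishing a Rellich–Ne\v{c}as type (Pohozaev) identity for $v$, exploiting the homogeneity of degree zero of $a_{ij}$, and then integrating by parts to extract the square term with a controlled remainder. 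Concretely, I would multiply the equation $a_{ij}v_{ij}=v^p$ by the infinitesimal generator of the scaling $x\cdot\na v-\beta v$ (which is the natural multiplier because it annihilates homogeneous functions of degree $\beta$, and $\beta$ is exactly the scaling exponent of the problem by~\eqref{C0}), and integrate over annuli $B_\rho\setminus B_\sigma$. The key point is that because $a_{ij}(x)=a_{ij}(x/|x|)$ is radially constant, the ``error'' terms that in a variable-coefficient Pohozaev identity would involve $x\cdot\na a_{ij}$ simply vanish: $x\cdot\na a_{ij}\equiv 0$. This is what makes the model coefficient~\eqref{aijdef} special and is the reason the monotonicity-type bound survives the discontinuity.

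The main steps, in order, would be: (i) Write $w=v|x|^{-\beta}$ and compute $\L(v)=\L(|x|^\beta w)$, expanding via the product rule; since $a_{ij}$ is homogeneous of degree zero, $\L(|x|^\beta)=c_\beta|x|^{\beta-2}$ for an explicit constant $c_\beta=c_\beta(a_{ij})$ determined by the spherical part of the operator, and the cross terms involve $a_{ij}\p i(|x|^\beta)\p j w=\beta|x|^{\beta-2}(x\cdot a\,\na w)$. (ii) Multiply the resulting identity for $w$ by $\p r w$ (equivalently $x\cdot\na w/|x|$) against the weight $|x|^{-2}\,dx$ — or, what amounts to the same, test the original equation against $x\cdot\na v-\beta v$ — and integrate over $B_{1/2}\setminus B_\e$. (iii) The leading term produces $\int (\p r w)^2\,|x|^{-2}\,dx$ with a coefficient bounded below by ellipticity (hypothesis $\bf(H1)$) up to the small BMO perturbation $\e$; since $a_{ij}$ is a small perturbation of the identity, the quadratic form $a_{ij}\nu_i\nu_j$ on the unit sphere is bounded below by $\lambda$, so the radial part of the second-order term is coercive. (iv) The right-hand side $v^p(x\cdot\na v-\beta v)$ integrates, after one more integration by parts using $v^p\,x\cdot\na v=\frac{1}{p+1}x\cdot\na(v^{p+1})$, to boundary terms plus $-\frac{n}{p+1}\int v^{p+1}+\beta\int v^{p+1}\cdot(\text{sign})$; with $\beta=\frac{2}{1-p}$ one checks the net coefficient of $\int v^{p+1}$ has a favorable sign or is otherwise absorbable, so this term is controlled by $\|v\|_{L^\infty}$ via Theorem~\ref{prop-dyadic:TH} (which gives $v(x)\le M|x|^\beta$, hence $v^{p+1}|x|^{-2}\in L^1$ near the origin because $(p+1)\beta-2=\frac{2p}{1-p}\ge 0$ — actually one needs the sharper growth to kill the singular weight, and that is supplied by the optimal-growth part of Theorem~\ref{prop-dyadic:TH}).

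The remaining ingredient is controlling the boundary terms on $\p B_{1/2}$ and on $\p B_\e$. On the outer sphere they are bounded by $\|v\|_{W^{2,q}(B_1)}$, which is finite by the $W^{2,q}$ theory cited after Theorem~\ref{EX:TH}. On the inner sphere $\p B_\e$, the terms carry factors like $\e^{-1}\sup_{\p B_\e}(|v|^2+|\na v|^2|x|^2)\cdot\e^{-2\beta}$; using the growth $|v|\le M|x|^\beta$ together with the interior gradient estimate $|\na v(x)|\le CM|x|^{\beta-1}$ (which follows by applying Theorem~\ref{prop-dyadic:TH} to $v_r$ and using elliptic estimates on the unit ball), these are bounded uniformly in $\e$ and in fact tend to a finite limit, so one may let $\e\to0$. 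I expect \textbf{the main obstacle} to be step (iii)–(iv): making the algebra of the Pohozaev identity genuinely produce a \emph{sign-definite} leading term after the $\e$-perturbation and after collecting all the lower-order pieces coming from the $c_\beta w$ term and the $\beta(x\cdot a\na w)$ cross term — in other words, verifying that the choice $\beta=\frac{2}{1-p}$, which is forced by scaling, is also exactly the choice that makes this integral identity close. This is where non-degeneracy at $0$ enters: it guarantees the blow-ups are nontrivial so that the estimate is not vacuous, but more importantly the combination of the integral bound with scale invariance is what will later force the blow-up limit to be homogeneous of degree $\beta$.
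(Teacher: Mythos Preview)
Your plan is correct and is, at bottom, the same idea the paper uses: test the equation against the generator of the scaling $x\cdot\nabla v-\beta v$ and exploit the degree-zero homogeneity of $a_{ij}$ so that no $x\cdot\nabla a_{ij}$ error terms appear. The difference is purely in execution. The paper avoids the bookkeeping you flag as the ``main obstacle'' in steps (iii)--(iv) by first passing to Spruck's logarithmic variable $r=e^{-t}$ and setting $w(t,\theta)=v(r,\theta)/r^{\beta}$. Because the specific matrix in~\eqref{aijdef} has no radial--tangential cross term (Lemma~\ref{LEL:rot2d} gives $\L v=\partial_{rr}v+\tfrac{1+\e}{r}\partial_r v+\tfrac{1+\e}{r^2}\partial_{\theta\theta}v$), the equation for $w$ is \emph{autonomous in $t$}: one gets $I_1+\e I_2=w^p$ with $I_1=\partial_{tt}w-2\partial_t w+\partial_{\theta\theta}w-\beta(\beta-2)w$ and $I_2=\partial_{\theta\theta}w+\beta w-\partial_t w$. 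Multiplying by $\partial_t w$ (which is exactly $\beta v/r^{\beta}-\partial_r v/r^{\beta-1}$, i.e.\ your multiplier in the new coordinates) and integrating over $[T_1,T_2]\times\S^1$, every term except the ``damping'' is a total $t$- or $\theta$-derivative and collapses to a boundary contribution; the single bulk term that survives is $(2+\e)\iint(\partial_t w)^2$, with the positive coefficient $2+\e$ visible by inspection rather than obtained by absorption. The boundary terms at $t=T_1,T_2$ are then controlled by the growth bound $v\le M|x|^{\beta}$ from Theorem~\ref{prop-dyadic:TH}, exactly as you outline, and undoing $r=e^{-t}$ gives~\eqref{098io1}. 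So your direct Pohozaev computation would close, but the Spruck change of variables is what makes the sign-definiteness you were worried about automatic.
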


In this framework, the integral estimate in~\eqref{098io1}, combined
with the scale invariance, implies that the blow-up limits
are homogeneous, as described in the following result:

\begin{theorem}\label{thm-2D}
Let $v$ be a strong solution 
of \eqref{pde-000} in $B_1$,
with~$a_{ij}$ as in~\eqref{aijdef}.
Assume that 
$0\in \fb v$ and $v$ is non-degenerate at $0$. Then any blow-up sequence at $0$ has a converging subsequence 
such that the limit is a homogeneous function of degree $\beta=\frac2{1-p}$. 
\end{theorem}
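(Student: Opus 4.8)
\textbf{Proof plan for Theorem~\ref{thm-2D}.}
The plan is to exploit the scale invariance of~\eqref{pde-000} to turn the finite integral in Theorem~\ref{PRE:thm-2D} into a statement about the blow-up limit. First I would fix a non-degenerate free boundary point, say~$0$ after a translation, and recall that for each~$r>0$ the rescaled function~$v_r(x)=r^{-\beta}v(rx)$ is again a strong solution of~\eqref{pde-000} with the same coefficients (here one uses that~$a_{ij}$ in~\eqref{aijdef} is homogeneous of degree zero, so~$a_{ij}(rx)=a_{ij}(x)$), with~$0\in\fb{v_r}$ and~$v_r$ non-degenerate at~$0$. By the optimal growth bound of Theorem~\ref{prop-dyadic:TH}, the family~$\{v_r\}$ is uniformly bounded in~$C^{1,\alpha}_{\rm loc}$ (via interior estimates for the equation, using that~$v_r^p$ is uniformly bounded on compact sets), hence along a subsequence~$r_k\to0$ we have~$v_{r_k}\to v_0$ in~$C^1_{\rm loc}(\R^n)$ for some limit~$v_0\ge0$; non-degeneracy guarantees~$v_0\not\equiv0$. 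The goal is then to show~$v_0$ is homogeneous of degree~$\beta$.

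The core of the argument is a change of variables in~\eqref{098io1}. Applying the estimate of Theorem~\ref{PRE:thm-2D} not to~$v$ but to the rescalings, or equivalently substituting~$x\mapsto r_k y$ in the integral over the annulus~$B_{1/2}\setminus B_{r_k}$ and using the homogeneity~$\beta$ of the weights together with the scaling~$v(r_ky)=r_k^\beta v_{r_k}(y)$, I expect the quantity
\begin{equation*}
\int_{B_{1/(2r_k)}\setminus B_1}\left(\beta\,\frac{v_{r_k}(y)}{|y|^\beta}-\frac{\p r v_{r_k}(y)}{|y|^{\beta-1}}\right)^2\frac{dy}{|y|^2}
\end{equation*}
to be bounded by the same constant~$\tilde C$, \emph{independently of~$k$}. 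Indeed the weight~$|x|^{-\beta-1}$ and the measure~$|x|^{-2}\,dx$ (in~$n=2$) combine so that the integrand is invariant under~$x=r_ky$; the only effect of the rescaling is that the domain of integration~$B_{1/2}\setminus B_{r_k}$ dilates to~$B_{1/(2r_k)}\setminus B_1$, which exhausts~$\R^2\setminus B_1$ as~$k\to\infty$. Since~$v_{r_k}\to v_0$ in~$C^1_{\rm loc}$, on every fixed annulus~$B_R\setminus B_1$ the integrand converges pointwise, so by Fatou's lemma
\begin{equation*}
\int_{B_R\setminus B_1}\left(\beta\,\frac{v_0(y)}{|y|^\beta}-\frac{\p r v_0(y)}{|y|^{\beta-1}}\right)^2\frac{dy}{|y|^2}\le\tilde C
\end{equation*}
for all~$R>1$, and letting~$R\to\infty$ gives that this integral over all of~$\R^2\setminus B_1$ is finite. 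A parallel argument, rescaling instead by a sequence~$\rho_k\to0$ much smaller than~$r_k$ (or a diagonal argument), should give finiteness of the same integral over~$B_1\setminus\{0\}$ as well, so that
\begin{equation*}
\int_{\R^2}\left(\beta\,\frac{v_0(y)}{|y|^\beta}-\frac{\p r v_0(y)}{|y|^{\beta-1}}\right)^2\frac{dy}{|y|^2}<+\infty.
\end{equation*}

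The final step is to conclude that this integral actually vanishes, which forces~$\beta v_0=|y|\,\p r v_0$ a.e., i.e. the Euler relation for homogeneity of degree~$\beta$; integrating along rays then yields~$v_0(ty)=t^\beta v_0(y)$. To see that the integral is zero and not merely finite, one uses again the scale invariance: the limit~$v_0$ inherits the property that all of \emph{its} rescalings~$(v_0)_r$ converge to limits satisfying the same finite-integral bound, but by the dilation-invariance of the integrand computed above, evaluating the bounded integral of Theorem~\ref{PRE:thm-2D} for~$(v_0)_r$ on~$B_{1/2}\setminus B_r$ and letting~$r\to0$, the tail contribution over~$B_{1/2}\setminus B_\sigma$ must tend to~$0$ for the total to stay finite as the domain exhausts the punctured plane --- equivalently, a finite integral over the full punctured plane of a dilation-invariant integrand (invariant under~$y\mapsto\lambda y$) can only be finite if the integrand is identically zero, since otherwise its integral over any annulus~$B_{2}\setminus B_1$ is a positive constant~$c$ and the integral over~$\bigcup_k(B_{2^{k+1}}\setminus B_{2^k})$ diverges. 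I expect this last observation --- the rigidity coming from dilation invariance of the measure~$|y|^{-2}dy$ and the weight~$|y|^{-\beta}$ in dimension~$2$ --- to be the conceptual heart of the proof, and the main technical obstacle to be the careful justification of passing the derivative term~$\p r v_{r_k}$ to the limit and the uniform control needed to apply Fatou, which relies crucially on the~$C^{1,\alpha}_{\rm loc}$ compactness furnished by the~$W^{2,q}$ theory of~\cite{VMO} together with Theorem~\ref{prop-dyadic:TH}.
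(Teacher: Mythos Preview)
Your strategy matches the paper's: rescale the integral in~\eqref{098io1} via~$x=\rho y$, use the~$C^1_{\rm loc}$ compactness of~\eqref{COMPA}, and deduce the Euler relation~$|y|\,\partial_r v_0=\beta v_0$. The paper's own proof is only a few lines and, after obtaining the uniform bound for the integral of the~$v_0$--integrand over all of~$\R^2$, simply asserts that the integrand vanishes; you are right to single this passage out as the point needing justification.

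But your justification is circular. You claim that the integrand for~$v_0$ is ``dilation-invariant (invariant under~$y\mapsto\lambda y$)'', so a finite integral of it against the dilation-invariant measure~$|y|^{-2}\,dy$ must vanish. Dilation invariance of that integrand, however, is exactly the statement~$v_0(\lambda y)=\lambda^\beta v_0(y)$ --- the conclusion you are after. (Your ``tail'' variant via~$(v_0)_r$ has the same defect: without already knowing~$(v_0)_r\to v_0$, it only shows that a \emph{further} blow-up of~$v_0$ is homogeneous.) The clean argument runs through the original~$v$, not~$v_0$. You already observed that the change of variables~$x=r_k y$ carries the integrand for~$v_{r_k}$ to that for~$v$, with~$dy/|y|^2=dx/|x|^2$; hence for any fixed annulus~$A=B_{R_2}\setminus B_{R_1}$,
\[
\int_{A}\Bigl(\beta\,\frac{v_{r_k}}{|y|^\beta}-\frac{\partial_r v_{r_k}}{|y|^{\beta-1}}\Bigr)^{2}\frac{dy}{|y|^2}
=\int_{r_kA}\Bigl(\beta\,\frac{v}{|x|^\beta}-\frac{\partial_r v}{|x|^{\beta-1}}\Bigr)^{2}\frac{dx}{|x|^2}\longrightarrow 0
\]
as~$k\to\infty$, since~$r_kA\subset B_{r_kR_2}$ shrinks to~$\{0\}$ and the total integral over~$B_{1/2}$ is finite by Theorem~\ref{PRE:thm-2D}. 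Passing to the limit on the left via~$v_{r_k}\to v_0$ in~$C^1(\overline A)$ gives that the~$v_0$--integrand has zero integral over every annulus, hence vanishes identically. No diagonal or second-blow-up argument is needed.
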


This result will in turn play a special role for the classification
of global solutions. Roughly speaking,
the homogeneity property, an appropriate use of polar coordinates
and explicit methods borrowed from the theory of ordinary differential
equations lead to a classification of solutions growing in a non-degenerate
way from a smooth free boundary. This classification
and the analysis of the blow-up limits
will be the main ingredients for the analysis
of irregular free boundary points,
as explained in the following result (compare also with Corollary~6.8
in~\cite{Blan}):

\begin{theorem}\label{DISCO}
Let~$n=2$, $\L$ be as in~\eqref{pde-000}
and~$a_{ij}$ as in~\eqref{aijdef}, with~$|\e|$ sufficiently small. 
Let~$v$ be 
a solution of~\eqref{pde-000} in~$B_1$ with~$p=0$. 
Assume that~$0\in\partial\{v>0\}$
and that~$v$ is non-degenerate at~$0$. Then~$\partial\{v>0\}$ cannot
be differentiable at the origin.
\end{theorem}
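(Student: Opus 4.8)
The plan is to argue by contradiction: assume the free boundary $\partial\{v>0\}$ is differentiable at the origin, and use the blow-up analysis to force a contradiction with the rotational invariance of the discontinuous coefficients. The starting point is Theorem~\ref{thm-2D}: since $v$ is non-degenerate at $0$, we may pass to a subsequence $v_{r_k}\to v_0$ where $v_0$ is a nonzero homogeneous function of degree $\beta=2$ (recall $p=0$), solving $\L_0 v_0=\chi_{\{v_0>0\}}$ with $\L_0$ the limiting operator whose coefficients are rotationally invariant and homogeneous of degree zero. If the free boundary is differentiable at the origin, then the blow-up of $\{v>0\}$ along $r_k$ converges (in a suitable Hausdorff/measure sense) to a half-plane, say $\{x\cdot e>0\}$ for some unit vector $e$; hence $v_0$ vanishes on a half-plane and $\partial\{v_0>0\}$ is the line $\{x\cdot e=0\}$.

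\smallskip

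\textbf{Classification of the global homogeneous solution.} First I would write $v_0$ in polar coordinates $x=(r\cos\theta,r\sin\theta)$. Because $v_0$ is homogeneous of degree $2$, we have $v_0(r,\theta)=r^2\phi(\theta)$ for a function $\phi\ge 0$ which is supported, by the half-plane property, on an arc of length $\pi$; after a rotation we may assume $\phi>0$ on $(0,\pi)$ and $\phi=0$ on $[\pi,2\pi]$ (with $\phi(0)=\phi(\pi)=0$). Plugging $v_0=r^2\phi(\theta)$ into the limiting equation $\L_0 v_0=1$ inside $\{v_0>0\}$ and using that the principal part $a_{ij}^0$ is rotationally invariant and $0$-homogeneous, the equation reduces to a \emph{linear, constant-coefficient} second-order ODE for $\phi$ on $(0,\pi)$ of the form $A\phi''(\theta)+B\phi'(\theta)+C\phi(\theta)=D$, with boundary conditions $\phi(0)=\phi(\pi)=0$; here the constants $A,B,C,D$ are explicit functions of $\lambda$, $\e$ and $\beta=2$ coming from expressing $\Delta$ plus the $x\otimes x|x|^{-2}$ perturbation in polar coordinates acting on $r^2\phi(\theta)$. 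Solving this ODE explicitly (particular solution plus homogeneous solutions built from $\sin/\cos$ or exponentials, depending on the discriminant) and imposing the two boundary conditions gives a one-parameter family of candidates $\phi$; the key extra constraint is that $v_0$ must be $C^{1,1}$ across the free line, i.e. $\phi'(0^+)=\phi'(\pi^-)=0$ as well (since $v_0$ and $\nabla v_0$ vanish on the complementary half-plane, matching the $W^{2,q}$-regularity/optimal-growth statement of Theorem~\ref{prop-dyadic:TH}).

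\smallskip

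\textbf{The contradiction.} The overdetermined boundary conditions $\phi(0)=\phi(\pi)=\phi'(0)=\phi'(\pi)=0$ on a \emph{linear} ODE with a nonzero right-hand side $D\ne 0$ will be solvable only for special (in fact, only for the unperturbed $\e=0$) values of the coefficients: when $\e=0$ the unique solution is the classical half-space solution $v_0=\tfrac12(x\cdot e)_+^2$, whose profile $\phi(\theta)=\tfrac12\sin^2\theta$ indeed satisfies $\phi(0)=\phi(\pi)=\phi'(0)=\phi'(\pi)=0$. For $\e\neq 0$ small, the rotational invariance of $a_{ij}^0$ means the ODE coefficients $A,B,C$ are the \emph{same} on $(0,\pi)$ regardless of which rotation $e$ we chose, yet — and this is the crucial point — the perturbed equation is no longer invariant under the reflection $\theta\mapsto \pi-\theta$ in the way the Laplacian is, so the extra conditions at both endpoints cannot simultaneously hold. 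Concretely, I would show that imposing $\phi(0)=\phi'(0)=0$ pins down $\phi$ uniquely (as the particular solution shifted to have a double zero at $\theta=0$), and then compute $\phi'(\pi^-)$ for this $\phi$: it equals a nonzero analytic function of $\e$ vanishing only at $\e=0$. Hence for $0<|\e|$ small, no admissible homogeneous degree-$2$ solution supported on a half-plane exists, contradicting the existence of the blow-up limit $v_0$. Therefore $\partial\{v>0\}$ cannot be differentiable at the origin.

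\smallskip

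\textbf{Main obstacle.} I expect the delicate step to be the \emph{rigorous identification of the blow-up of the coincidence set with a half-plane} under the mere hypothesis that $\partial\{v>0\}$ is differentiable at $0$ — differentiability of the boundary as a set is a priori weaker than $C^1$ regularity, so I would need to argue that, along the blow-up scale, the rescaled positivity sets $\{v_{r_k}>0\}$ converge in Hausdorff distance on compact sets to the half-space $\{x\cdot e>0\}$, and that this forces $v_0\equiv 0$ on $\{x\cdot e<0\}$ together with the matching $C^{1,1}$ conditions on the line. This requires combining the uniform non-degeneracy (to rule out the limit positivity set being lower-dimensional or empty) with the growth bound of Theorem~\ref{prop-dyadic:TH} and standard $W^{2,q}$ compactness for the blow-up family, plus care that the limiting operator $\L_0$ is the one with the rotationally invariant homogeneous coefficients (using hypothesis \textbf{(H3)} and the decomposition \eqref{PART}). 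Once the half-plane picture is secured, the remaining ODE computation is routine but must be done carefully enough to exhibit the nonvanishing of $\phi'(\pi^-)$ as a function of $\e$.
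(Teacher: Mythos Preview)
Your approach is essentially the paper's: argue by contradiction, blow up to a nontrivial degree-$2$ homogeneous limit $v_0=r^2\phi(\theta)$ supported (thanks to the differentiability assumption) on a half-plane, and show that the resulting ODE for $\phi$ on $(0,\pi)$ with the overdetermined conditions $\phi=\phi'=0$ at both endpoints has no solution when $\e\neq 0$. The paper packages the ODE step as a separate classification result (Theorem~\ref{n2class}) and solves it via a first integral---multiply by $\phi'$, integrate, then separate variables---while you propose solving the constant-coefficient linear ODE directly; the two routes are equivalent here.

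One correction to your heuristic, though. For this particular $a_{ij}$ the polar form (Lemma~\ref{LEL:rot2d}) applied to $r^2\phi(\theta)$ gives
\[
(1+\e)\,\phi''(\theta)+2(2+\e)\,\phi(\theta)=1,
\]
so in your notation $B=0$ and the equation \emph{is} invariant under $\theta\mapsto\pi-\theta$. The obstruction is not a broken reflection symmetry but a frequency mismatch: the unique solution with $\phi(0)=\phi'(0)=0$ is $\phi(\theta)=a_\e\bigl(1-\cos(\omega_\e\theta)\bigr)$ with $a_\e=\tfrac{1}{2(2+\e)}$ and $\omega_\e=\sqrt{2(2+\e)/(1+\e)}$, and then $\phi(\pi)=\phi'(\pi)=0$ forces $\omega_\e\pi\in 2\pi\Z$, i.e.\ $\omega_\e\in 2\Z$, which for small $|\e|$ holds only at $\e=0$. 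Your concrete plan (pin down $\phi$ by the data at $\theta=0$, then evaluate at $\theta=\pi$) is exactly right and recovers this; only the ``reflection asymmetry'' explanation should be dropped.
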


The paper is organized as follows: 
in Section \ref{sec-exist} we establish the existence of a 
strong solution of \eqref{pde-000} in the unit ball $B_1$
and thus prove Theorem~\ref{EX:TH}. Next, using a dyadic 
scaling argument, 
we prove that a solution~$v(x)$ grows away from the 
free boundary $\fb v$ as $[\hbox{dist}(x, \fb v)]^\beta$.
This is contained in Section \ref{sec-growth}, which
will provide the proof of Theorem~\ref{prop-dyadic:TH}.
Our main technical tool, which
is the
uniform integral bound in Theorem~\ref{PRE:thm-2D}, is 
established in Section~\ref{sec-Spruck}. 
To this goal, we use some
computations based on the ideas of Joel Spruck \cite{S83}.
Section~\ref{sec-Spruck} also contains
the proof of Theorem~\ref{thm-2D}, which fully relies on the integral
estimate in~\eqref{098io1}.
Finally, in Section~\ref{sec-global} we show that the free boundary cannot be regular at the 
free boundary points where $a_{ij}$ suffers a 
discontinuity satisfying $\bf (H3)$, thus completing the proof of
our main result in Theorem~\ref{DISCO}.

\section{Existence of solutions}\label{sec-exist}

In this section, we give the proof of the existence result
in Theorem~\ref{EX:TH}.

\begin{proof}[Proof of Theorem~\ref{EX:TH}]
The proof is based on a classical penalization argument. The case of the obstacle problem,
corresponding to~$p=0$, is treated in~\cite{Blan}.
Our proof is similar, but we will sketch it for the reader's convenience since unlike \cite{Blan} our coefficients are not in VMO. In fact, for our case $p\in(0,1)$
the proof is shorter since for $p>0$ the penalization function~$\phi_\e$ 
(see below) is continuous at the origin.
Hence,
by a customary  compactness 
argument, we deduce that the limit of the penalized problem is a solution of \eqref{pde-000} a.e. 
Therefore, we only need to establish uniform estimates for the penalized problem \eqref{penal-hav}.
The details of the proof go as follows.
\medskip 

Let $\eta\in C_0^\infty(\R^n)$ such that ${\rm{\supp }}\,\eta\subset B_1$, 
$\eta\ge 0$ and $\int_{B_1} \eta=1$. 
Let $\eta_\ep(x)=\ep^{-n}\eta(x/\ep)$. Then 
$\eta_\ep$ is a standard mollifier. Set $a_{ij}^\ep:=a_{ij}*\eta_\ep$
and~$g_\ep:=g*\eta_\ep$, where~$g$ is as in the statement of
Theorem~\ref{EX:TH}. Furthermore, let
$\phi_\ep:\R\to\R$ be a family of functions with the following properties
\begin{eqnarray*}
&& 0\le \phi_\ep(s)\le 1,\\
&& \phi_\ep(s)=0 \quad \hbox{if}\ s\le 0, \\
&& \phi_\ep(s)=s^{p}\quad \hbox{if} \ s\ge \ep,\\
&& \phi_\ep(s)\ \hbox{is monotone increasing},\\
{\mbox{and}}&& \phi_\ep\in C^\infty(\R).
\end{eqnarray*}
Then, there exists a classical solution $v^\ep$ to the following 
Dirichlet problem 
$$ 
\left\{
\begin{array}{lll}
a_{ij}^\ep(x)\p{ij}v^\ep(x)=\phi_\ep(v^\ep(x)) \ \hbox{in}\ B_1,\\
v^\ep(x)=g_\ep(x)\ \hbox{on}\ \partial B_1.
\end{array}
\right.
$$
Now, for every~$t\in[0,1]$,
we consider the penalized problem 
\be\label{ep-pde}
\left\{
\begin{array}{lll}
a_{ij}^\ep(x)\p{ij}v^\ep_t(x)=t\phi_\ep(v^\ep_t(x)) \ \hbox{in}\ B_1,\\
v^\ep_t(x)=g_\ep(x)\ \hbox{on}\ \partial B_1.
\end{array}
\right.
\ee
Here, the subscript~$t$ is just a parameter, and does not denote the time derivative.
We set
$$ I:=\{t\in[0,1] \ \hbox{s.t.}\ \eqref{ep-pde}\ \hbox{has a solution}\}$$
and we
claim that
\begin{equation} \label{C1}   
{\mbox{$I$ is open}}.\end{equation} 
Note that $a_{ij}^\ep(x)\p{ij}v^\ep_t(x)\ge 0$, hence by the maximum 
principle
$| v^\ep_t(x)|\le C+\|g_\ep\|_\infty$, for some~$C>0$.
For any~$t\in[0,1]$,
we consider the operator 
$A_tu:=a_{ij}u_{ij}-t\phi_\ep(u)$.
Then the Fr\'echet derivative of $A_t$ is 
\[DA_th=a_{ij}h_{ij}-t\phi_\ep'(u)h.\]
Thus the derivative operator has the form 
\[DA_th=a_{ij}h_{ij}+tc(x)h, \quad \hbox{with}\ c(x)\le 0\]
since, by construction, $\phi_\ep$ is monotone increasing. 
Applying the Schauder theory in Chapter~6 of~\cite{GT}, we conclude that for any 
$f\in C^\alpha$
and~$g\in C^{2, \alpha}(\overline{B_1})$
there exists a solution~$w^\ep$ of
\be \left\{
\begin{array}{lll}
DA_t w^\ep=f \ \hbox{in}\ B_1,\\
w^\ep(x)=g_\ep(x)\ \hbox{on}\ 
\partial B_1.
\end{array}\right.
\ee
This
implies that $DA_t:C^{2, \alpha}(\overline{B_1})\to 
C^{2, \alpha}(\overline{B_1})\oplus C^{\alpha}(\overline{\partial B_1})$ 
is surjective. By the maximum principle (recall that~$c(x)=-\phi_\ep'(v^\ep_t)\le 0$)
$DA_t$ is also injective. 
Therefore, $DA_t$ is invertible,
which establishes~\eqref{C1}.
\medskip

Now we show that 
\begin{equation} \label{C2}   
{\mbox{$I$ is closed}}.\end{equation}
To this aim,
we first observe that,
from the Sobolev embedding, we have that
$\|v^\ep_t\|_{C^{1, \alpha}}\lesssim 
\|v^\ep_t\|_{W^{2, q}}$. Consequently, 
applying the Schauder estimates in Chapter~6 of~\cite{GT}, we obtain that 
\[\|v^\ep_t\|_{C^{4, \alpha}}\le C(\ep),\]
for some~$C(\ep)>0$,
independently of $t$. Thus if $I\ni t_k\to t_0$ then from Arzela-Ascoli	theorem it follows that 
$v^\ep_{t_k}\to v^\ep_{t_0}$ in $C^{4, \alpha}(\overline{B_1})$ and $v_{t_0}^\ep$ solves the 
corresponding problem \eqref{ep-pde},
thus proving~\eqref{C2}.\medskip

Now, 
from~\eqref{C1} and~\eqref{C2},
we deduce that a solution of~\eqref{ep-pde}
exists for all~$t\in[0,1]$.
By Theorem~4.2 in~\cite{VMO}, we have that
\be\label{penal-hav}
\|v^\ep_t\|_{W^{2, q}(B_1)}\le C, \quad {\mbox{for some }}q>1,
\ee
uniformly in $\ep$
because $a_{ij}^\ep$ verifies $\bf (H1)$-$\bf (H3)$.
\end{proof}

\section{Optimal growth from the free boundary}\label{sec-growth}
Let $x_0\in \fb v\cap B_1$ and consider the scaled function 
$$v_r(x):= \frac{v(x_0+rx)}{r^\beta}, \qquad r>0.$$
We remark that if the inequality 
\begin{equation}\label{C3}
v(x)\le C|x-x_0|^\beta\end{equation}  
holds in some neighborhood of 
$x_0$, for some constant $C>0$ and~$\beta$ as in~\eqref{C0},
then $v_r$ is uniformly bounded as $r\to 0$.

So, we show that the growth control in~\eqref{C3}
is indeed satisfied for bounded solutions of~\eqref{pde-000}.
The result that we have is the following:

\begin{prop}\label{prop-dyadic}
Let $u\ge 0$ be a weak solution of \eqref{pde-000} in $B_1$ such that 
\[0\le u(x)\le M\]
for some constant $M>0$. Then there exists a constant $C>0$ such that 
for each $x\in B_{\frac12}\cap \fb v$ there holds
\be
S(k+1, x)\le \max\left\{
\frac{CM}{2^{\beta k}}, \frac12S(k, x)
\right\} 
\ee
where $S(k, x):=\sup_{B_{2^{-k}}(x)}u$.
\end{prop}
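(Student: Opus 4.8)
The plan is to prove Proposition~\ref{prop-dyadic} by a dyadic iteration combined with a compactness (blow-up) argument, which is the standard scheme for this type of optimal-growth estimate. First I would argue by contradiction: suppose the dichotomy fails along a sequence, i.e. there exist solutions~$u_j$ with~$0\le u_j\le M$, points~$x_j\in B_{1/2}\cap\fb{u_j}$, and indices~$k_j$ such that
\[
S(k_j+1,x_j)> \max\left\{\frac{CM}{2^{\beta k_j}},\,\frac12 S(k_j,x_j)\right\},
\]
with~$C\to+\infty$. Since~$u_j\le M$, the first lower bound forces~$2^{\beta k_j}S(k_j+1,x_j)/M\ge C\to\infty$ but also~$S(k_j+1,x_j)\le M$, so~$k_j\to\infty$. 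The key normalization is to set
\[
w_j(y):=\frac{u_j(x_j+2^{-k_j}y)}{S(k_j+1,x_j)},
\]
which satisfies~$w_j\ge 0$, $w_j(0)=0$ (since~$x_j\in\fb{u_j}$), $\sup_{B_{1/2}}w_j=1$ by definition of~$S(k_j+1,x_j)$, and the second violated inequality gives~$\sup_{B_1}w_j\le 2$.

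Next I would examine the equation solved by~$w_j$. Writing out~$\L u_j=u_j^p$ and rescaling, $w_j$ solves~$a_{ij}(x_j+2^{-k_j}y)\,\partial_{ij}w_j = 2^{(2-\beta(1-p))k_j}\,S(k_j+1,x_j)^{p-1}\,w_j^p$; by the choice~$\beta=2/(1-p)$ the power of~$2$ is exactly~$2-\beta(1-p)=0$, so the right-hand side is~$S(k_j+1,x_j)^{p-1}w_j^p$. Since~$S(k_j+1,x_j)\ge CM2^{-\beta k_j}$ and~$p-1<0$, the factor~$S(k_j+1,x_j)^{p-1}\le (CM2^{-\beta k_j})^{p-1}=(CM)^{p-1}2^{\beta k_j(1-p)}$; hmm, this blows up, so the naive bound is not enough. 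The correct observation is the opposite: because we are trying to show growth is \emph{at most}~$|x|^\beta$, the contradiction hypothesis should make~$S(k_j+1,x_j)$ \emph{large} relative to~$2^{-\beta k_j}$, i.e.~$S(k_j+1,x_j)\ge CM2^{-\beta k_j}$ with~$C$ large means the rescaled forcing term~$S(k_j+1,x_j)^{p-1}\to 0$ provided we also use~$S(k_j+1,x_j)\le M$ is false in the relevant regime — more carefully, since $k_j\to\infty$ and $S(k_j+1,x_j)$ stays bounded above by $M$, the factor $S(k_j+1,x_j)^{p-1}\ge M^{p-1}>0$, so I instead bound the forcing using $\|w_j^p\|\le 2^p$ and the fact that on the unit ball the rescaled coefficients $a_{ij}(x_j+2^{-k_j}\cdot)$ still satisfy $\bf(H1)$--$\bf(H2)$ with the same constants (scale-invariance of the BMO seminorm and ellipticity). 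Then the $W^{2,q}$ estimate of Theorem~4.4 in~\cite{VMO} applied on $B_{1/2}$ gives $\|w_j\|_{W^{2,q}(B_{1/2})}\le C'$ uniformly, and a further bootstrap gives uniform $C^{1,\alpha}_{loc}$ bounds; but I still need the right-hand side to go to zero to get a harmonic-type limit. This is achieved by noting that $2^{-\beta k_j}\le S(k_j+1,x_j)/(CM)$ together with $S(k_j+1,x_j)\le S(k_j,x_j)\le\cdots$ does not directly help, so the clean route is: normalize instead by $M_j:=S(k_j+1,x_j)$ and separately treat the two terms of the max to see that the contradiction forces $M_j/2^{-\beta k_j}\to\infty$, hence the forcing constant $M_j^{p-1}\cdot 2^{0}=M_j^{p-1}$ multiplied against the natural scaling is $\le (2^{-\beta k_j})^{p-1}M_j^{p-1}\cdot(2^{-\beta k_j})^{1-p}\to 0$ — i.e. after factoring, $\|\text{RHS}\|_{L^\infty(B_1)}\le (2^{\beta k_j}M_j)^{p-1}\cdot 2^p\to 0$.

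With the forcing term tending to zero and the coefficients converging (up to subsequence, after a rotation since $\bf(H3)$ coefficients are rotation-invariant and homogeneous of degree zero, so $a_{ij}(x_j+2^{-k_j}y)$ converges in, say, $L^q_{loc}(\R^n\setminus\{0\})$ to a homogeneous-degree-zero limit, or to the identity when the base points stay away from the discontinuity), I pass to the limit: $w_j\to w_\infty$ in $C^1_{loc}$, where $w_\infty\ge 0$ solves $\bar a_{ij}\partial_{ij}w_\infty=0$ in $B_1$ (in the appropriate weak sense, using the $W^{2,q}$ theory for the limit operator, which still satisfies $\bf(H1)$--$\bf(H2)$), with $w_\infty(0)=0$ and $\sup_{B_{1/2}}w_\infty=1$. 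Now apply the Harnack inequality / strong maximum principle (valid for such operators by the work in~\cite{VMO} or Krylov--Safonov-type estimates): since $w_\infty\ge 0$, solves a linear nondivergence equation, and vanishes at an interior point, it must vanish identically on $B_{1/2}$, contradicting $\sup_{B_{1/2}}w_\infty=1$. This contradiction proves the dichotomy with some finite $C$.

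I expect the main obstacle to be the limit equation and the applicability of the strong maximum principle: the rescaled coefficients $a_{ij}(x_j+2^{-k_j}\cdot)$ are genuinely discontinuous (at the preimage of $x_0$), so one cannot invoke classical Schauder theory, and the convergence of $a_{ij}(x_j+2^{-k_j}y)$ must be handled carefully — either the base points $x_j/2^{-k_j}$ stay bounded, in which case the limit is a fixed rotation-invariant homogeneous-degree-zero matrix (still elliptic with small BMO), or they escape to infinity, in which case $a_{ij}(x_j+2^{-k_j}y)\to$ identity locally and the limit is harmonic. In both cases one needs the strong maximum principle / Harnack inequality for nondivergence operators with small-BMO coefficients, which holds by Krylov--Safonov and the estimates underlying~\cite{VMO}; making this rigorous (in particular ensuring the limit $w_\infty$ is a strong solution so that the SMP applies, via the uniform $W^{2,q}$ bound) is the technical heart. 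A secondary point is verifying that the rescaled coefficients retain the uniform smallness of the BMO seminorm — this is immediate from the scale-invariance of the BMO seminorm, since $\fint_{B_r(z)}|a_{ij}(x_j+2^{-k_j}y)-\fint|\,dy = \fint_{B_{2^{-k_j}r}(x_j+2^{-k_j}z)}|a_{ij}-\fint|$, which is $\le\delta(R)$ by $\bf(H2)$.
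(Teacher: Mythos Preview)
Your approach is exactly the paper's: argue by contradiction along a sequence with $C\to\infty$, rescale by $S(k_j+1,x_j)$, show the forcing term vanishes, get uniform $W^{2,q}$ bounds via the scale-invariance of the BMO seminorm, pass to a limit, and contradict the strong maximum principle. The one place you stumble is the scaling of the right-hand side: your initial claim that the power of $2$ is $2-\beta(1-p)=0$ is an algebra slip---the correct identity is
\[
a_{ij}(x_j+2^{-k_j}y)\,\partial_{ij}w_j
= 2^{-2k_j}\,S(k_j+1,x_j)^{p-1}\,w_j^p
= \bigl(2^{\beta k_j}S(k_j+1,x_j)\bigr)^{p-1}\,w_j^p,
\]
which tends to $0$ because the base exceeds $jM\to\infty$ and $p-1<0$; this is precisely the expression $(2^{\beta k_j}M_j)^{p-1}$ you eventually reach, so your conclusion is right even though the intermediate discussion is muddled.
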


\begin{remark}\label{RE:dyadic}
It is well known that the estimate in Proposition \ref{prop-dyadic} implies the 
desired growth rate
in~\eqref{C3}. 
\end{remark}

\begin{proof}[Proof of Proposition~\ref{prop-dyadic}]
We use a dyadic scaling argument. Suppose that the claim in Proposition \ref{prop-dyadic} 
fails, then there exists a sequence of integers $k_i$, and points $x_i\in B_{\frac12}\cap \fb{v}$ such that 
\be\label{hav-1}
S(k_i+1, x_i)>\max\left\{
\frac{i M}{2^{\beta k_i}}, \frac12S(k_i, x_i)
\right\}.
\ee
We introduce the scaled functions
\be
u_i(x):=\frac{v(x_i+2^{-k_i}x)}{S(k_i+1)},
\ee
where~$S(\cdot)$ is a short notation for~$
S(\cdot,x_i)$.
Then, we have that 
\be\label{hav-nondeg}
\sup_{B_{\frac12}}u_i=
\frac{ \sup_{B_{2^{-k_i+1}}(x_i)}u }{S(k_i+1)}=1
,\ee
and, from \eqref{hav-1},
\be\label{hav-sup}
\sup_{B_1}u_i
=
\frac{ \sup_{B_{2^{-k_i}}(x_i)}u }{S(k_i+1)}
=\frac{S(k_i)}{S(k_i+1)}
\le 2.
\ee
Furthermore, setting $r_i:=2^{-k_i}$, by a direct computation we see that 
\begin{eqnarray*}
\sum_{l,m}a_{lm}(x_i+xr_i)\p{lm} u_i(x)
&=&\frac{2^{-2k_i}}{S(k_i+1)}u^{p}(x_i+xr_i)\\
&=&\frac{2^{-2k_i}S^{p}(k_i+1)}{
S(k_i+1)}u^{p}_i(x)\\
&=&
\frac{1}{2^{2k_i}S^{1-p}(k_i+1)}u_i^p(x).
\end{eqnarray*}
Notice also
that \eqref{hav-1} and~\eqref{C0} yield that 
\begin{eqnarray*}
iM&\le& 2^{\beta k_i}S(k_i+1)\\
&=&\left(2^{2k_i} S^{\frac2\beta}(k_i+1)\right)^{\frac\beta2}\\
&=&\left(2^{2k_i} S^{1-p}(k_i+1)\right)^{\frac\beta2}.
\end{eqnarray*}
Consequently, recalling~\eqref{hav-sup},
we have that
\begin{equation}\label{3.5BIS}
0\le \sum_{l,m}a_{lm}(x_i+ xr_i)
\p{lm} u_i(x)
\le \frac{u^p_i(x)}{(k_iM)^{\frac2\beta}}
\le
\frac{2^p}{(k_iM)^{\frac2\beta}}\to 0
\quad \hbox{as}\quad i\to\infty.
\end{equation}
Let us define the sequence of 
matrices $A^i_{lm}(x):=a_{lm}(x_i+r_ix)$.
Then $A^i(x)$ satisfies $\bf (H1)$. Observe that 
the change of variables $\xi=x_i+r_ix$  implies 
\[\fint_{B_r(z)}A^i_{lm}=\fint_{B_{rr_i}(x_i+r_iz)}a_{lm}.\]
Recalling that $x\in B_{\frac12}$, we see that 
\be\label{hav-small} 
\sup_{0<r\leq R}\sup_{z\in \R^n}\fint_{B_r(z)}\left|A^i_{lm}(x)-\fint_{B_r(z)}A_{lm}^i\right|dx = 
\sup_{0<r\leq Rr_i}\sup_{y\in \R^n}\fint_{B_r(y)}\left|a_{lm}(\xi)-\fint_{B_r(y)}a_{lm}\right|d\xi
\le \delta(Rr_i),
\ee 
implying that $\bf (H2)$ is also satisfied for the matrices $A^i$.
 
Furthermore, in light of~\eqref{3.5BIS},
we see that~$u_i$ solves the inequality 
\be\label{hav-pde}
\left|\sum_{l,m}A^i_{lm}(x)\p{lm}u_i(x)\right|\le \frac{2^p}{(k_iM)^{\frac2\beta}}\to 0.
\ee
{F}rom \eqref{hav-sup}, 
\eqref{hav-small} and \eqref{hav-pde}
it follows that we can apply Theorem 4.1 
in~\cite{VMO} to conclude that for any $q>1$ the following estimate holds uniformly in $i$
\be\label{hav-W2p}
\|u_i\|_{W^{2,q}(B_{\rho})}\le C(\rho, q)
\ee
where $B_\rho$ is a fixed ball but with arbitrary radius $\rho>0$.
Consequently, the sequence of strong solutions $\{u_i\}$ is bounded in 
$W^{2, q}_{loc}\cap L^\infty$. {F}rom Krylov-Safonov theorem it follows that 
for a subsequence, still denoted by $u_i$,
we have that~$u_i\to u$ in $B_{\frac34}$ uniformly. Thus 
$u_i(0)=0$ and \eqref{hav-nondeg} 
translates to the limit function $u$, namely we have  
\[u(0)=0, \quad u(x)\ge 0, \quad \sup_{B_{\frac12}} u=1, \quad \sup_{B_1} u\le 2.\]
On the other hand $A^i\to A^0$ a.e. and $A^0$ satisfies $\bf (H1)$-$\bf (H3)$. In particular, 
$A^0_{lm}u_{lm}=0$ a.e. Hence,  $u(0)=0$ and the strong maximum principle imply that $u\equiv 0$  which is in contradiction with  $\sup_{B_{\frac12}} u=1$ and the proof is complete.
\end{proof}

{F}rom Proposition \ref{prop-dyadic} and Remark~\ref{RE:dyadic}
we obtain Theorem~\ref{prop-dyadic:TH}, as desired.

\section{Blow-up sequences and homogeneity}\label{sec-Spruck}

We want to show that, using a technique
invented by J.
Spruck in~\cite{S83},
at the non degenerate free boundary points 
the blow-up is a homogeneous function of degree $\beta$.
For
a sequence of positive numbers $r_k\to 0$ and $x_0\in \fb v$, we consider  
the blow-up sequence 
\begin{equation}\label{COMPA0}v_{r_k}(x):=\frac{v(x_0+r_k x)}{r_k^\beta}.\end{equation}
{F}rom Theorem \ref{prop-dyadic:TH}
we know that the sequence $\{v_{r_k}\}$ is bounded and solves 
equation \eqref{pde-000} with $a_{ij}$ satisfying $\bf(H1)$-$\bf(H3)$. Thus, applying Theorem 
4.1 in~\cite{VMO}, we conclude that 
$\{v_{r_k}\}$ is locally uniformly bounded in $W^{2, q}$ for any 
$q>1$.
Then a customary compactness argument implies 
that there exists a subsequence $\{v_{k_i}\}$ 
and $v_0$, such that 
\begin{equation}\label{COMPA}
{\mbox{$v_{k_i}\to v_0$ in $C^1_{loc}(\R^n)$.}}\end{equation} The function~$v_0$ is called a blow-up limit at $x_0$.

\subsection{$2$D problems}

As customary, it is often useful to
write solutions of partial differential equations
in polar coordinates. In our case, we
have the following result:

\begin{lemma}\label{LEL:rot2d}
Let~$\L$ be as in~\eqref{pde-000},
with~$a_{ij}$ as in~\eqref{aijdef}. Then
\begin{equation}\label{pde-pol}
\L v=
\p{rr}v+\frac1r\p r v+\frac1{r^2}\p {\th\th} v+\e\left(\frac{\p {\th\th}v}{r^2}+\frac{\p rv}r\right).
\end{equation}
\end{lemma}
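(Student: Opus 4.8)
The plan is to compute the operator $\L v = a_{ij}v_{ij}$ directly in polar coordinates, starting from the Cartesian form given in \eqref{pde-0BIS}. First I would recall the standard change of variables $x_1 = r\cos\th$, $x_2 = r\sin\th$, so that the Laplacian becomes $\Delta v = \p{rr}v + \frac1r\p r v + \frac1{r^2}\p{\th\th}v$; this accounts for the first three terms on the right-hand side of \eqref{pde-pol}. It remains to handle the perturbation term, namely $\e\big(\frac{x_1^2}{|x|^2}v_{22} - \frac{2x_1x_2}{|x|^2}v_{12} + \frac{x_2^2}{|x|^2}v_{11}\big)$.

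The key observation is that the quadratic form associated with the perturbation matrix is $\frac{\e}{|x|^2}(x_1\xi_2 - x_2\xi_1)^2$, as already noted in the excerpt. Since $x_1\xi_2 - x_2\xi_1$ corresponds, under the polar change of variables, to the angular derivative (up to the factor $r$) — more precisely, the vector field $x_1\p 2 - x_2\p 1$ equals $\p\th$ — the perturbation operator, acting on second derivatives, should morally be $\frac{\e}{r^2}\p{\th\th}$, plus lower-order corrections coming from the fact that $\p\th$ does not commute with itself trivially in the sense that $(x_1\p 2 - x_2\p 1)^2 = \p{\th\th}$ exactly, but $a_{ij}v_{ij}$ is $\frac\e{r^2}$ times the \emph{pure second-order} part $x_1^2 v_{22} - 2x_1x_2 v_{12} + x_2^2 v_{11}$, which differs from $(x_1\p 2 - x_2\p 1)^2 v = \p{\th\th}v$ by the first-order term $(x_1\p 2 - x_2\p 1)v$ applied appropriately. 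Carrying out the differentiation carefully, one finds $x_1^2 v_{22} - 2x_1x_2 v_{12} + x_2^2 v_{11} = \p{\th\th}v$ plus a term involving $\p r v$; dividing by $r^2$ and multiplying by $\e$ yields exactly $\e\big(\frac{\p{\th\th}v}{r^2} + \frac{\p r v}{r}\big)$, which is the claimed perturbation contribution.

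Concretely, I would express $v_{11}, v_{12}, v_{22}$ in terms of $\p{rr}v$, $\p{r\th}v$, $\p{\th\th}v$, $\p r v$, $\p\th v$ using the standard formulas (e.g. $v_{11} = \cos^2\th\,\p{rr}v - \frac{2\sin\th\cos\th}{r}\p{r\th}v + \frac{\sin^2\th}{r^2}\p{\th\th}v + \frac{\sin^2\th}{r}\p r v + \frac{2\sin\th\cos\th}{r^2}\p\th v$, and similarly for the others), substitute into $x_1^2 v_{22} - 2x_1x_2 v_{12} + x_2^2 v_{11}$ with $x_1 = r\cos\th$, $x_2 = r\sin\th$, and simplify. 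All the $\p{rr}v$, $\p{r\th}v$ and $\p\th v$ contributions should cancel by trigonometric identities, leaving precisely $\p{\th\th}v + r\,\p r v$. Dividing by $r^2$ and adding the Laplacian in polar form gives \eqref{pde-pol}.

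The main obstacle is purely computational: keeping track of the numerous trigonometric terms in the second-derivative transformation formulas and verifying that the unwanted terms cancel. There is no conceptual difficulty — the identity $x_1\p 2 - x_2\p 1 = \p\th$ and the structure of the quadratic form already make the answer transparent — so the proof is essentially a careful bookkeeping exercise, and I would present it as such, displaying the transformation formulas and the final simplification without belaboring every intermediate step.
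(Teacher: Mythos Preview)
Your proposal is correct and, in its ``concrete'' part, is exactly what the paper does: write out $x_1^2 v_{22}$, $x_2^2 v_{11}$, $2x_1x_2 v_{12}$ in polar coordinates, combine, and watch the $\p{rr}v$, $\p{r\th}v$, $\p\th v$ terms cancel by trigonometry, leaving $\p{\th\th}v + r\,\p r v$.

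Your conceptual remark, however, is actually sharper than both the paper's computation and your own brute-force plan, and you could promote it to the proof itself. Since $x_1\p 2 - x_2\p 1 = \p\th$, applying this vector field twice gives
\[
\p{\th\th}v = (x_1\p 2 - x_2\p 1)(x_1 v_2 - x_2 v_1)
= x_1^2 v_{22} - 2x_1x_2 v_{12} + x_2^2 v_{11} - (x_1 v_1 + x_2 v_2),
\]
and $x_1 v_1 + x_2 v_2 = r\,\p r v$, so the perturbation term equals $\frac{\e}{r^2}(\p{\th\th}v + r\,\p r v)$ in one line. This bypasses the bookkeeping entirely; the paper does not exploit this and instead expands everything term by term.
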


\begin{proof}
We will use polar coordinates $r$, $\theta $ and rewrite  the partial derivatives as follows
\begin{eqnarray}
\p {x_1}=\cos\theta\p r-\frac{\sin\theta}r\p \theta, \quad \p {x_2}=\sin \theta \p r+\frac{\cos\theta}r\p\theta.
\end{eqnarray}
By a straightforward
computation we have that 
\begin{equation}\label{blya}\begin{split}
2x_1x_2\partial_{12}v=\;&2r^2\cos\theta\sin\theta\Big\{
\sin \th\cos\th \p{rr}v-\frac{\sin\th\cos\th}{r}\p rv
+\frac{\cos^2\th-\sin^2\th}r\p{\th r}v\\\nonumber
&\qquad\qquad\qquad 
+\frac{\sin^2\th-\cos^2\th}{r^2}\p\th v
-\frac{\sin\th\cos\th}{r^2}\p{\th\th}v
\Big\},\\
x_2^2\p{11}v=\;&r^2\sin^2\th \left\{
\cos^2\th\p{rr}v+\frac{\sin^2\th}{r}\p rv
-\frac{2\sin\th\cos\th}r\p{r\th}v
+\frac{2\sin\th\cos\th}{r^2}\p\th v+\frac{\sin^2\th}{r^2}\p{\th \th} v
\right\},\\
x_1^2\p{22}v=\;&r^2\cos^2\th\left\{
\sin^2\th\p{rr}v+\frac{\cos^2\th}r \p rv+\frac{2\sin\th\cos\th}r \p{r\th}v-
\frac{2\sin\th\cos\th}{r^2}\p\th v+\frac{\cos^2\th}{r^2}\p{\th\th}v
\right\}.
\end{split}\end{equation}
Combining these three identities and recognizing the terms we get that
\begin{eqnarray*}
&&\frac1\e\left(\L v-\Delta v\right)\\&=&
\p{r\th}\frac{2\sin\th\cos\th}r\left[\cos^2\th
-\sin^2\th-\cos^2\th+\sin^2\th\right]
+\p{\th\th}v\left[\cos^4\th+\sin^4\th
+2\sin^2\th\cos^2\th\right]\\
&&+\p r v \frac1r\left[\cos^4\th+\sin^4\th
+2\sin^2\th\cos^2\th\right]+
\p\th v\frac{2\sin\th\cos\th}r
\left[\sin^2\th-\cos^2\th-\sin^2\th+\cos^2\th\right]\\
&=&\frac{\p {\th\th}v}{r^2}+\frac{\p rv}r.
\end{eqnarray*}
Using this and the standard representation of the Laplacian
in polar coordinates, the desired result follows.
\end{proof}

With this, we are in position of proving Theorem~\ref{PRE:thm-2D}.

\begin{proof}[Proof of Theorem~\ref{PRE:thm-2D}]
We let $r:=e^{-t}$ and $w(t,\th):=\frac{v(r, \th)}{r^{\beta}}$.
Then we have 
\begin{eqnarray*}&&
\p \th w=\frac{\p\th v}{r^\beta},\\ &&
\p{\th\th} w=\frac{\p{\th\th} v}{r^\beta},\\ {\mbox{and }}&&
\p t w=-\frac{\p r v}{r^{\beta-1}}+\beta w.
\end{eqnarray*}
Plugging this into \eqref{pde-pol} we infer that
\begin{eqnarray*}
 r^{\beta-2}((\p{tt}w-\p tw-\beta(\beta-1)w)+(\beta w-\p tw)+\p{\th\th}w)+\e\left(r^{\beta-2}\p{\th\th}w+r^{\beta-2}[\beta w-\p tw]\right)=r^{-\beta p}w^p
.\end{eqnarray*}
This, after recalling that $\beta-2=-p\beta$, yields that
\begin{equation}\label{erjghgbv:0}
 I_1+\e I_2=w^p,
\end{equation}
where 
\[I_1:=\p{tt}w-2\p t w+\p{\th\th}w-\beta(\beta-2)w \quad 
{\mbox{ and }}\quad
I_2:=\p{\th\th}w+\beta w-\p tw.\]
Next, we multiply both sides of equation~\eqref{erjghgbv:0} 
by~$\partial_t w$
and we integrate first over the unit circle and then in
the interval $[T_1, T_2]$ to get that
\begin{equation}\label{erjghgbv}
\int_{T_1}^{T_2}\int_{\S^1}I_1\,\partial_t w 
+\e\int_{T_1}^{T_2}\int_{\S^1}I_2\,\partial_t w=
\int_{T_1}^{T_2}\int_{\S^1}w^p \,\partial_t w.
\end{equation}
Now we observe that
\begin{equation}\label{erjghgbv:1}\begin{split}
\int_{T_1}^{T_2}\int_{\S^1}I_2\,\partial_t w=\;&-\int_{T_1}^{T_2}
\int_{\S^1}(\p t w)^2+ \beta\int_{T_1}^{T_2}
\int_{\S^1}w\,\p tw+ \int_{T_1}^{T_2}
\int_{\S^1}\p{\th\th}w\,\p t w\\
=\;&-\int_{T_1}^{T_2}\int_{\S^1}
(\p t w)^2+\beta\int_{\S^1}\frac{w^2}2
\Bigg|_{T_1}^{T_2}-\int_{T_1}^{T_2}\int_{\S^1}\p \th w\,\p{r\th} w
\\
=\;&-\int_{T_1}^{T_2}\int_{\S^1}(\p t w)^2+
\beta\int_{\S^1}\frac{w^2}2\Bigg|_{T_1}^{T_2}
-\int_{\S^1}\frac{(\p{\th} w)^2}{2}\Bigg|_{T_1}^{T_2}.
\end{split}\end{equation}
Similarly,
\begin{equation}\label{erjghgbv:2}
\int_{T_1}^{T_2}\int_{\S^1}I_1\,\partial_t w=
-2\int_{T_1}^{T_2}\int_{\S^1}(\p t w)^2+
\int_{\S^1}\frac{w^2}2\Bigg|_{T_1}^{T_2}-
\int_{\S^1}\frac{(\p{\th} w)^2}{2}\Bigg|_{T_1}^{T_2}-
\frac{\beta(\beta-2)}2\int_{\S^1}\frac{w^2}2\Bigg|_{T_1}^{T_2}.
\end{equation}
Moreover, 
$$\int_{T_1}^{T_2}\int_{\S^1}w^p\,\p tw 
=\int_{\S^1}\frac1{p+1}w^{p+1}\Bigg|_{T_1}^{T_2}
$$
So, plugging this, \eqref{erjghgbv:1} and~\eqref{erjghgbv:2} 
into~\eqref{erjghgbv}, we obtain that
\begin{equation*}\begin{split}&
(\e+2 )\int_{T_1}^{T_2}\int_{\S^1}(\p t w)^2=
\e\left\{\beta\int_{\S^1}\frac{w^2}2\Bigg|_{T_1}^{T_2}-
\int_{\S^1}\frac{(\p{\th} w)^2}{2}\Bigg|_{T_1}^{T_2}\right\}
-\int_{\S^1}\frac1{p+1}w^{p+1}\Bigg|_{T_1}^{T_2}
\\&\qquad\qquad
+
\int_{\S^1}\frac{w^2}2\Bigg|_{T_1}^{T_2}-\int_{\S^1}\frac{(\p{\th} w)^2}{2}\Bigg|_{T_1}^{T_2}-\frac{\beta(\beta-2)}2\int_{\S^1}\frac{w^2}2\Bigg|_{T_1}^{T_2}.
\end{split}\end{equation*}
Since $\p t w=-\frac{\p r v}{r^{\beta-1}}+\beta \frac{v}{r^\beta}
$, the last inequality then reads
\[
\int_{T_1}^{T_2}\int_{\S^1}\left(\beta \frac{v}{r^\beta}  
-\frac{\p r v}{r^{\beta-1}}\right)^2\, d\th\,dt\le \tilde C,
\]
where $\tilde C$ depends only on the the constant 
$M$ in the growth estimate  $v(x)\le M |x|^\beta$, see 
Theorem \ref{prop-dyadic:TH}.
Since $T_1$ and~$ T_2$ are arbitrary,
by the change of variable~$r:=e^{-t}$ we obtain that
$$ \int_{0}^{1/2}\int_{\S^1}\left(\beta\, \frac{v(r,\theta)}{r^\beta}  
-\frac{\p r v(r,\theta)}{r^{\beta-1}}\right)^2\,\frac{dr\, d\th}{r}
\le \tilde C.$$
This implies the desired result via polar coordinates.
\end{proof}

{F}rom Theorem~\ref{PRE:thm-2D}, we obtain the homogeneity
of the blow-up sequences, according to Theorem~\ref{thm-2D}:

\begin{proof}[Proof of Theorem~\ref{thm-2D}]
By~\eqref{098io1}, a change of variable~$x=\rho y$
gives that
$$ \int_{B_{\frac1{2\rho}}} \left(\beta\, \frac{v_\rho(y)}{|y|^\beta}  
-\frac{\p r v_\rho( y)}{|y|^{\beta-1}}\right)^2\,\frac{dy}{|y|^2}
\le \tilde C,$$
where the notation in~\eqref{COMPA0} has been used.
This and~\eqref{COMPA} imply that
$$ \int_{\R^n} \left(\beta\, \frac{v_0(y)}{|y|^\beta}  
-\frac{\p r v_0(y)}{|y|^{\beta-1}}\right)^2\,\frac{dy}{|y|^2}
\le \tilde C,$$
and so
$$ \beta\, \frac{v_0(y)}{|y|^\beta}  
=\frac{\p r v_0(y)}{|y|^{\beta-1}},$$ for any~$y\in\R^n$,
which implies the desired result (see e.g. Lemma~4.2
in~\cite{DSV15}).
\end{proof}

\subsection{$n$-dimensional problems}
For the sake of completeness,
we consider now a multidimensional 
model.
We take
\begin{equation}\label{def aij dim n}
a_{ij}(x):=\delta_{ij}+\e x_ix_j|x|^{-2}.\end{equation} Notice that the 
hypotheses in~{\bf (H1)-(H3)} 
are satisfied for sufficiently small $|\e|$. 

We extend Theorem \ref{thm-2D} to this case. To this aim, let us 
switch to polar coordinates and define
\[
\begin{array}{lll}
x_1=r\cos\theta_1\\
\ \vdots\\
x_k=r\sin\th_1\sin\th_2\dots\sin\th_{k-1}\cos\th_k\\
\ \vdots\\
x_n=r\sin\th_1\sin\th_2\dots\sin\th_n,
\end{array}
\]
where $0\le\th_k\le \pi$, with~$
k=1, \dots, n-2,$ and $ -\pi\le 
\th_{n-1}\le \pi$.
In this setting, the analogue of Lemma~\ref{LEL:rot2d}
goes as follows:

\begin{lemma}
Let~$\L$ be as in~\eqref{pde-000},
with~$a_{ij}$ as in~\eqref{def aij dim n}. 
Assume that~$x$ lies on the $x_1$ 
axis.
Then 
\begin{equation} \label{CONS}\L v=
(1+\e)\p{rr}v+\frac1r\p r v
+\frac1{r^2}\p {\th\th} v.
\end{equation}
\end{lemma}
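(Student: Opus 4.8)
The plan is to compute the Laplace--Beltrami type expansion of $\L v = \delta_{ij}v_{ij} + \e\, x_ix_j|x|^{-2}v_{ij}$ in spherical coordinates, evaluated at a point on the $x_1$ axis, and show that the only effect of the anisotropic correction there is to replace the coefficient $1$ of $\p{rr}v$ by $1+\e$. The term $\delta_{ij}v_{ij}=\Delta v$ contributes the standard expression $\p{rr}v+\frac{n-1}{r}\p r v+\frac1{r^2}\Delta_{\S^{n-1}}v$, so the real task is to identify $\e\,x_ix_j|x|^{-2}v_{ij}=\frac{\e}{r^2}\sum_{i,j}x_ix_j\,\p{ij}v$. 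The key observation is that $\sum_{i,j}x_ix_j\p{ij}v$ is the purely radial second-order operator: since $x=r\omega$ with $\omega\in\S^{n-1}$, one has $\sum_i x_i\p i = r\p r$ acting on functions, and iterating, $\sum_{i,j}x_ix_j\p{ij}v = (r\p r)^2 v - r\p r v = r^2\p{rr}v$. Hence $\frac{\e}{r^2}\sum_{i,j}x_ix_j\p{ij}v = \e\,\p{rr}v$ \emph{identically}, at every point, not merely on the axis. Adding this to the Laplacian would then give $(1+\e)\p{rr}v + \frac{n-1}{r}\p r v + \frac1{r^2}\Delta_{\S^{n-1}}v$ for general $n$.

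So the first step I would carry out is to verify the identity $\sum_{i,j}x_ix_j\,\p{ij}v = r^2\,\p{rr}v$ cleanly, either by the operator computation above or by a direct chain-rule argument using $\p i = \frac{x_i}{r}\p r + (\text{angular terms})$ and noting that the angular terms, when contracted against $x_i x_j$, vanish because $\sum_i x_i \p{\th_k}(\text{as a vector field})$-type cross terms drop out. The second step is to plug this into $\L v$ and combine with the standard spherical form of $\Delta v$. At this point the formula reads $\L v = (1+\e)\p{rr}v + \frac{n-1}{r}\p r v + \frac1{r^2}\Delta_{\S^{n-1}}v$. The third step, specific to the stated lemma, is to observe that when $x$ lies on the $x_1$ axis (so $\th_1=\dots=\th_{n-2}=0$ in the given convention, i.e. all spherical angles are at their base value) the full Laplace--Beltrami operator $\Delta_{\S^{n-1}}$ collapses: along the $x_1$ axis the metric coefficients degenerate and only the derivative in the single angle conjugate to moving off the axis survives at leading order, giving $\Delta_{\S^{n-1}}v = \p{\th\th}v$ there, where $\th$ denotes that distinguished polar angle $\th_1$. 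Likewise one should check that the $\frac{n-1}{r}\p r v$ term is genuinely $\frac1 r\p r v$ as written in~\eqref{CONS} --- here I suspect the statement is implicitly taking $n=2$ in the displayed formula, or there is a typo, so I would present the general-$n$ identity $(1+\e)\p{rr}v+\frac{n-1}r\p r v+\frac1{r^2}\Delta_{\S^{n-1}}v$ and then specialize.

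The main obstacle is the bookkeeping of the spherical Laplacian near the coordinate singularity on the axis: one must justify carefully why, evaluated exactly on the $x_1$ axis, only the $\p{\th_1\th_1}$ second derivative contributes to $\Delta_{\S^{n-1}}v$ and all the $1/\sin^2\th_j$ weighted terms for $j\ge 2$, as well as the first-order angular drift terms, either vanish or are not present because the relevant metric factors $\prod\sin^2\th$ are $1$ (for the $\th_1$ direction) while the coefficients multiplying the other angular second derivatives blow up but are multiplied against derivatives of $v$ that one assumes smooth, so the combination is handled by the standard fact that the Laplacian of a smooth function is smooth across the axis. Concretely I would avoid this by arguing at the level of the ambient operator: pick coordinates so the axis point is $(r_0,0,\dots,0)$, and directly Taylor-expand, or invoke that $\Delta v$ in \emph{any} orthonormal frame at that point equals $\p{rr}v + \p{\th\th}v$ plus lower order, where $\th$ is arc length along a great circle through the point --- i.e. use the geometric, coordinate-free meaning of the radial and a single tangential second derivative. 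That reduces the whole lemma to the algebraic identity $\sum_{i,j}x_ix_j\p{ij}v=r^2\p{rr}v$ together with the classical polar form of $\Delta$ restricted to the axis, both of which are routine. A secondary, purely cosmetic obstacle is reconciling the $\frac1r\p r v$ coefficient in~\eqref{CONS} with the general $\frac{n-1}{r}$; I would flag this as the $2$-dimensional specialization (or correct it) rather than let it pass silently.
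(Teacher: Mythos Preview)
Your central identity $\sum_{i,j}x_ix_j\,\p{ij}v=r^2\,\p{rr}v$, obtained from $(r\p r)^2v=r\p r v+\sum_{i,j}x_ix_j\,\p{ij}v$, is correct and in fact holds at every point, not only on the axis. This is a cleaner and more general route than the paper's. The paper instead exploits the axis hypothesis directly: on the $x_1$ axis one has $x_2=\dots=x_n=0$, so all terms $x_ix_j|x|^{-2}\p{ij}v$ with $(i,j)\neq(1,1)$ vanish, leaving $\e\,\p{11}v$; then a one-line chain-rule computation of $\p{x_1}$ in spherical coordinates, evaluated at $\th_1=0$, gives $\p{11}v=\p{rr}v$ there. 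Both arguments yield the same correction $\e\,\p{rr}v$; yours has the advantage of working globally and making transparent that the anisotropy is purely radial, while the paper's is shorter but tied to the specific point. Your caution about the first-order coefficient is well placed: the displayed $\tfrac{1}{r}\p r v$ in \eqref{CONS} does not match the general $\tfrac{n-1}{r}\p r v$ coming from $\Delta$, and indeed in the subsequent theorem the paper silently replaces $\p{\th\th}$ by the full Laplace--Beltrami operator $\Delta_{\th\th}$, so you are right to present the general-$n$ formula $(1+\e)\p{rr}v+\tfrac{n-1}{r}\p r v+\tfrac{1}{r^2}\Delta_{\S^{n-1}}v$ and flag the discrepancy rather than try to force the angular reduction on the axis, which (as you note) is a coordinate-singularity issue and not actually needed for the application.
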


\begin{proof} {F}rom the chain rule, we have that 
\be
\frac{\partial v}{\partial x_1}=\frac{\partial v}{\partial r}\frac{\partial r}{\partial x_1}+\frac{\partial v}{\partial \th_1}\frac{\partial \th_1}{\partial x_1}
=\frac{\partial v}{\partial r}\cos\th_1-\frac{\sin \th_1}r\frac{\partial v}{\partial \th_1}
.\ee
Hence, proceeding as in \eqref{blya}, 
and using $\th=0$ to set the point on the~$x_1$ axis,
we get that
$$
\frac{x_1^2}{|x|^2}\p{11} v=\p{rr}v,
$$
which gives the desired result.
\end{proof}

In this setting, the analogue of Theorem~\ref{thm-2D}
is the following:

\begin{theorem}
Let $v$ be a strong solution of \eqref{pde-000} in $B_1\subset\R^n$ with $a_{ij}$
as in~\eqref{def aij dim n}.
Assume that~$0\in \fb v$ and $v$ is non-degenerate at $0$. Then any blow-up sequence at $0$ has a converging subsequence 
such that the limit is a homogeneous function of degree $\beta=\frac2{1-p}$. 
\end{theorem}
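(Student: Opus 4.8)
The plan is to mirror, in the $n$-dimensional setting, the two-step argument already carried out for the planar case in Theorems~\ref{PRE:thm-2D} and~\ref{thm-2D}: first establish a uniform integral estimate of the form
\[
\int_{B_{1/2}}\left(\beta\,\frac{v(x)}{|x|^\beta}-\frac{\p r v(x)}{|x|^{\beta-1}}\right)^2\frac{dx}{|x|^2}\le \tilde C,
\]
and then pass to the blow-up limit to conclude that the limit satisfies $\beta\,|x|^{-\beta}v_0=|x|^{1-\beta}\p r v_0$, which forces $\beta$-homogeneity by Lemma~4.2 in~\cite{DSV15}. The growth bound from Theorem~\ref{prop-dyadic:TH} (valid here since $a_{ij}$ in~\eqref{def aij dim n} satisfies {\bf (H1)-(H3)}) together with the $W^{2,q}$ compactness from Theorem~4.1 in~\cite{VMO} gives a convergent subsequence $v_{r_k}\to v_0$ in $C^1_{loc}$; so, as in the proof of Theorem~\ref{thm-2D}, once the integral estimate is in place the homogeneity follows by a change of variables $x=\rho y$ and Fatou's lemma.

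The substantive work is therefore the integral estimate, which is a Spruck-type computation. First I would switch to coordinates $r=e^{-t}$ and set $w(t,\theta):=r^{-\beta}v(r,\theta)$, with $\theta=(\theta_1,\dots,\theta_{n-1})$ the spherical angles on $\S^{n-1}$. Using the standard form of the Laplace--Beltrami operator on $\S^{n-1}$, the equation $\L v=v^p$ should transform, after using $\beta-2=-p\beta$, into an equation of the schematic shape
\[
I_1+\e\,I_2=w^p,
\]
where $I_1=\p{tt}w-(n-2)\,\p t w+\Delta_{\S^{n-1}}w-\beta(\beta+n-2)\,w$ is the ``model'' part coming from the $\delta_{ij}$ piece and $I_2$ is the correction coming from the $\e\,x_ix_j|x|^{-2}$ term. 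The precise coefficients of the $t$-derivative and the zero-order term in $I_1$ will differ from the $2$D case because of the factor $r^{n-1}$ in the volume element, and the $\e x_ix_j|x|^{-2}$ term must be rewritten in polar coordinates away from the axis (unlike the computation in~\eqref{CONS}, which only used the point on the $x_1$-axis). Establishing the correct identity for $I_2$ in full generality — not just on a ray — is the step I expect to cost the most care.

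Next I would multiply by $\p t w$ and integrate over $\S^{n-1}\times[T_1,T_2]$, exactly as in the proof of Theorem~\ref{PRE:thm-2D}. Integration by parts in $t$ turns $\int\p{tt}w\,\p t w$ and $\int w\,\p t w$ into boundary terms plus a negative multiple of $\int(\p t w)^2$, and integration by parts on $\S^{n-1}$ turns $\int\Delta_{\S^{n-1}}w\,\p t w$ into a boundary term involving $\int|\nabla_{\S^{n-1}}w|^2$; the right-hand side $\int w^p\,\p t w$ becomes a boundary term $\frac1{p+1}\int w^{p+1}$. Collecting everything, the coefficient of $\int_{T_1}^{T_2}\int_{\S^{n-1}}(\p t w)^2$ is $(n-2)+\tfrac\e2(n-2)$ up to normalization — here one needs $|\e|$ small and $n\ge 2$ so that this coefficient is positive — while the remaining terms are all boundary contributions at $T_1$ and $T_2$ that are bounded uniformly in $T_1,T_2$ by the growth estimate $v(x)\le M|x|^\beta$ of Theorem~\ref{prop-dyadic:TH}. (In the borderline case $n=2$ the zero-order contributions must be tracked with more care, which is precisely why the $2$D statement was isolated; for $n\ge 3$ there is genuine coercivity in $\p t w$.) Since $\p t w=-r^{1-\beta}\p r v+\beta r^{-\beta}v$, letting $T_1\to\infty$ (i.e.\ shrinking to the origin) and $T_2$ correspond to $r=1/2$, and undoing $r=e^{-t}$, yields the claimed bound
\[
\int_{0}^{1/2}\int_{\S^{n-1}}\left(\beta\,\frac{v(r,\theta)}{r^\beta}-\frac{\p r v(r,\theta)}{r^{\beta-1}}\right)^2 r^{n-1}\,\frac{dr\,d\theta}{r^2}\le\tilde C,
\]
which is the integral estimate in polar coordinates. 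The passage to the blow-up limit then completes the proof exactly as in Theorem~\ref{thm-2D}.
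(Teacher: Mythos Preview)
Your plan matches the paper's proof: Spruck's substitution $r=e^{-t}$, $w=r^{-\beta}v$, multiply by $\partial_t w$, integrate over $\S^{n-1}\times[T_1,T_2]$, bound the resulting boundary terms via Theorem~\ref{prop-dyadic:TH}, then scale and pass to the limit exactly as in Theorem~\ref{thm-2D}. The one place you create unnecessary difficulty for yourself is the $\e$-correction: since $\frac{x_ix_j}{|x|^2}\,v_{ij}=\partial_{rr}v$ at \emph{every} point (the left side is just $(\hat r\cdot\nabla)^2 v$), the operator is globally $\L v=(1+\e)\,\partial_{rr}v+\frac{n-1}{r}\,\partial_r v+\frac{1}{r^2}\Delta_{\S^{n-1}}v$, so $I_2$ is purely radial and needs no separate computation ``away from the axis''; with this in hand the coefficient of $\int(\partial_t w)^2$ comes out to $(1+\e)(2\beta-1)+(n-1)$, which is positive in every dimension for small $|\e|$, and $n=2$ is not a borderline case.
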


\begin{proof}
We use the change of variables $r=e^{-t}, (\th_1, \dots, \th_{n-1})\in \mathbb S^{n-1}$, where 
$\mathbb S^{n-1}$ is the unit sphere in $\R^n$. Hence, for the
function $w(t,\th)=\frac{v(r, \th)}{r^{\beta}}$,
making use of~\eqref{CONS},
equation~\eqref{pde-000} can be rewritten as
\[
(1+\e)(\p{tt}w-\p tw-\beta(\beta-1)w)+(\beta w-\p tw+\Delta_{\th\th}w)=w^p
,\]
where $\Delta_{\th\th}$ is the 
Laplace-Beltrami operator 
on the unit sphere.
Thus, repeating the integration by parts as in the proof of 
Theorem \ref{PRE:thm-2D}
and the scaling argument in the proof of Theorem~\ref{thm-2D}, the desired result follows. 
\end{proof}

\section{Global homogeneous solutions}\label{sec-global}

In this section, we would like to classify the global solutions 
of~\eqref{pde-000} in the plane
in the homogeneous setting for the case of the obstacle problem.

\begin{theorem} \label{n2class}
Let~$n=2$, $\L$ be as in~\eqref{pde-000}
and~$a_{ij}$ as in~\eqref{aijdef}. Let~$v$ be 
a solution of~\eqref{pde-000} in~$\R^2$ with~$p=0$
which is homogeneous of degree~$2$. Assume that~$0\in\partial\{v>0\}$
and that~$\partial\{v>0\}$ is differentiable
at the origin.
Then~$\e$ in~$a_{ij}$ needs to be equal to~$0$
(and thus~$a_{ij}=\delta_{ij}$).
\end{theorem}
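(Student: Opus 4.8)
The plan is to write the candidate solution $v$ in polar coordinates and use the homogeneity of degree $2$ (which matches $\beta = 2/(1-p)$ with $p=0$) to reduce the PDE to an ODE on the circle, then exploit the smoothness of the free boundary at the origin to pin down the angular profile explicitly and force $\e=0$. Since $v$ is homogeneous of degree $2$, write $v(r,\theta) = r^2 \psi(\theta)$ for a function $\psi \ge 0$ on $\S^1$. Plugging into the polar form of the operator from Lemma~\ref{LEL:rot2d}, namely $\L v = \p{rr}v + \frac1r \p r v + \frac1{r^2}\p{\th\th}v + \e(\frac{\p{\th\th}v}{r^2} + \frac{\p r v}{r})$, and using $\p{rr}v = 2\psi$, $\p r v = 2r\psi$, $\p{\th\th}v = r^2\psi''$, the equation $\L v = \chi_{\{v>0\}}$ becomes, on the set $\{\psi>0\}$,
\[
(1+\e)\,\psi''(\theta) + (4 + 2\e)\,\psi(\theta) = 1 .
\]
This is a linear constant-coefficient ODE whose general solution on each arc where $\psi>0$ is $\psi(\theta) = \frac{1}{4+2\e} + A\cos(\kappa\theta) + B\sin(\kappa\theta)$ with $\kappa = \sqrt{(4+2\e)/(1+\e)} = \sqrt{2}\,\sqrt{(2+\e)/(1+\e)}$. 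Note $\kappa \to \sqrt{2}$ as $\e\to 0$, consistent with the classical half-space solution $\frac14(x_1^+)^2$, i.e. $\psi(\theta) = \frac14\cos^2\theta$ on $\{|\theta|<\pi/2\}$.

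Next I would use the hypothesis that $\partial\{v>0\}$ is differentiable at the origin. Differentiability of the free boundary at $0$, together with the homogeneity, forces the positivity set $\{\psi>0\}$ to be (at most) a half-circle: a homogeneous-degree-two function whose zero set has a tangent line at $0$ must vanish on a closed half-plane through $0$ (up to the degenerate cases where $\{v>0\}$ is all of $\R^2$ or empty, which are excluded since $0\in\partial\{v>0\}$). So after a rotation we may assume $\{\psi>0\} = \{|\theta| < \pi/2\}$ with $\psi(\pm\pi/2) = 0$. Moreover, optimal $C^{1,1}$ regularity of $v$ — guaranteed by the $W^{2,q}$ bound and the growth estimate (Theorems~\ref{EX:TH} and~\ref{prop-dyadic:TH}) — forces the matching condition $\psi'(\pm\pi/2) = 0$ as well, since $v$ and $\na v$ must vanish continuously across the free boundary. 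Now impose these four conditions on $\psi(\theta) = \frac{1}{4+2\e} + A\cos(\kappa\theta) + B\sin(\kappa\theta)$: by the even symmetry of the boundary conditions we may take $B=0$, and then $\psi(\pi/2)=0$ and $\psi'(\pi/2)=0$ read
\[
\frac{1}{4+2\e} + A\cos\!\Big(\frac{\kappa\pi}{2}\Big) = 0, \qquad -A\kappa\sin\!\Big(\frac{\kappa\pi}{2}\Big) = 0 .
\]
Since $A\neq 0$ (otherwise $\psi \equiv \frac{1}{4+2\e} > 0$ everywhere, contradicting $\psi(\pi/2)=0$) and $\kappa\neq 0$, the second equation forces $\sin(\kappa\pi/2)=0$, i.e. $\kappa\pi/2 = m\pi$ for some positive integer $m$, hence $\kappa = 2m$. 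But $\kappa = \sqrt{2}\sqrt{(2+\e)/(1+\e)}$, and for $|\e|$ small $\kappa$ is close to $\sqrt 2 \approx 1.414$, so the only integer value $2m$ it can equal in a neighborhood of $\sqrt 2$ is $\kappa = 2$, i.e. $m=1$; solving $\sqrt 2\sqrt{(2+\e)/(1+\e)} = 2$ gives $(2+\e)/(1+\e) = 2$, i.e. $\e = 0$. (One should also rule out $\kappa = 2m$ for larger $m$ by the smallness of $|\e|$, and check that $\kappa=2$, $\e=0$ indeed yields a nonnegative profile — it gives $\psi = \frac14 + A\cos(2\theta)$ with $\psi(\pi/2)=0 \Rightarrow A = 1/4$, so $\psi = \frac14(1+\cos 2\theta) = \frac12\cos^2\theta$; note there is a factor discrepancy to reconcile against $\frac14\cos^2\theta$, which I expect to resolve once the boundary arc and the normalization of the ODE are tracked carefully, possibly indicating the positivity set is a different arc or the $C^{1,1}$ matching pins $A$ differently — this bookkeeping is routine.)

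\textbf{Main obstacle.} The delicate point is the rigorous deduction that differentiability of $\partial\{v>0\}$ at the origin, combined with homogeneity of degree $2$, forces $\{v>0\}$ to be exactly a half-plane (equivalently that $\{\psi>0\}$ is a single open half-circle of angular width exactly $\pi$, not a union of smaller arcs or a larger set). A priori the positivity set could be a cone with opening different from $\pi$, but then its boundary would not have a well-defined tangent line at $0$; turning this intuition into a clean argument — handling the possibility of multiple arcs, and using the nonnegativity of $v$ to exclude sign-changing profiles — is where the care is needed. Once the arc is fixed to width $\pi$, the rest is the elementary ODE boundary-value computation above, and the conclusion $\e=0$ follows from the transcendental constraint $\kappa \in 2\Z_{>0}$ colliding with $\kappa$ near $\sqrt 2$ under the smallness of $|\e|$. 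A secondary technical point is justifying the $C^{1}$ matching (zero Neumann data) of $\psi$ at the endpoints of the arc, which relies on the optimal growth/regularity established earlier in the paper rather than on any monotonicity formula.
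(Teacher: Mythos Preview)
Your approach is essentially the paper's: reduce to the angular ODE via Lemma~\ref{LEL:rot2d}, impose $\psi=\psi'=0$ at the endpoints of the positivity arc, and extract a quantization condition on the angular frequency that forces $\e=0$ for small~$|\e|$. The paper reaches the same ODE $(1+\e)g''+2(2+\e)g=1$ but integrates it via the first integral $(1+\e)(g')^2+2(2+\e)g^2=2g$ (a device that generalizes to $p\in(0,1)$) rather than writing the explicit trigonometric solution; both routes yield the same constraint $\kappa\cdot(\text{arc width})\in 2\pi\Z$ with $\kappa=\sqrt{2(2+\e)/(1+\e)}$.

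Two corrections. First, an arithmetic slip: at $\e=0$ one has $\kappa=\sqrt{2}\cdot\sqrt{2}=2$, not $\sqrt{2}$; this is precisely why $\kappa=2m$ with $m=1$ is attainable and forces $\e=0$ (and it resolves your ``factor discrepancy'': the classical half-plane solution is $\tfrac12(x_1^+)^2$, giving $\psi=\tfrac12\cos^2\theta$, so nothing needs reconciling). As written, your sentence ``$\kappa$ is close to $\sqrt2\approx1.414$, so the only value $2m$ it can equal \dots\ is $2$'' is self-contradictory. Second, the paper does not insist the arc width is exactly~$\pi$: it takes a component $(0,k\pi)$ with $k\in\{1,2\}$ and obtains the weaker constraint $\kappa\in\Z$, which still pins $\kappa=2$ hence $\e=0$; your claim that differentiability at~$0$ forces a half-plane is slightly too strong, since for a degree-$2$ homogeneous $v$ it only forces the boundary rays to lie on a single line, permitting either a half-plane or a slit plane. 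Finally, for the endpoint condition $\psi'=0$ the paper does not invoke $C^{1,1}$ regularity but deduces it directly from Theorem~\ref{prop-dyadic:TH}: $v(x)\le M|x-x_0|^\beta$ with $x_0$ on the free boundary and $\beta=2>1$ gives $|g(\theta)-g(0)|=o(|\theta|)$, hence $g'(0)=0$.
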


\begin{proof} We first make a general calculation
valid for all~$p\in[0,1)$.
Let $v(x)=r^\beta g(\th)$. 
We suppose (up to a rotation) 
that the arc $(0, \alpha)$ is a component of the positivity set of $g$.
In this way,
\begin{equation}\label{GG}g(0)=g(\alpha)=0.
\end{equation}
We let~$x_0:=(1,0)$. {F}rom Remark~\ref{RE:dyadic},
we know that~\eqref{C3}
is satisfied, and thus there exists~$M>0$
such that
$$ M\,|x-x_0|^\beta \ge v(x)=r^\beta g(\theta) = r^\beta\big|g(\theta)-g(0)\big|.
$$
For a small~$t>0$, we evaluate this formula at the point~$x_t:=(1,t)$,
which corresponds in polar coordinate to~$r_t:=\sqrt{1+t^2}$ and~$\theta_t =\arctan t$.
In this way, we obtain that
$$ M t^\beta \ge (1+t^2)^{\frac\beta2} \big|g(\arctan t)-g(0)\big|.$$
So, dividing by~$t$ and sending~$t\to0$, using the fact that~$\beta>1$,
$$ 0 \ge \lim_{t\to0} \left|\frac{g(\arctan t)-g(0)}{t}\right|=
|g'(0)|$$
and so
\begin{equation}\label{GP0}
g'(0)=0.\end{equation}
Furthermore, from \eqref{pde-pol},
\[
\beta(\beta-1)g+\beta(1+\e)g+(1+\e)g''=g^p,
\]{or equivalently}
\[
\beta(\beta+\e)g+(1+\e)g''=g^p.
\]
Multiplying both sides by $g'$ and integrating yields
\be\label{blyaa}
(1+\e)[g']^2+\beta(\e+\beta) g^2+C_o=\frac2{p+1}g^{p+1}\ee
where $C_o\in\R$ is an arbitrary constant. 
Using~\eqref{GG} and~\eqref{GP0}, we have that~$g(0)=0=g'(0)$, which gives that $C_o=0$.
Moreover 
\[g^2\left(\frac{g^{p-1}}{p+1}-\frac{\beta(\beta+\e)}2\right)\ge 0.\]
Consequently, solving \eqref{blyaa} we obtain  
\[g'=\pm\frac1{\sqrt{1+\e}}\sqrt{\frac{2}{p+1}g^{p+1}-\beta(\e+\beta) g^2}.\]
This is a separable equation, and so we obtain
\begin{equation}\label{gu91}
\int \frac{dg}{\sqrt{\frac{2}{p+1}g^{p+1}-\beta(\e+\beta) g^2}}
=\pm \frac{1}{\sqrt{1+\e}}\int d\th+C. \end{equation}
The integrals above may be explicitly computed in terms
of hypergeometric functions for any~$p\in[0,1)$, but,
for concreteness, we now restrict ourselves to the case~$p=0$.
In this case, \eqref{gu91} becomes
\begin{equation}\label{9sudhvgsdwe}
\frac1{\sqrt 2}\int \frac{dg}{\sqrt{g-(2+\e) g^2}}=
\pm \frac{1}{\sqrt{1+\e}}\int d\th+C .\end{equation}
We now set~$a_\e:=\frac1{2(2+\e)}$ and we observe that
$$ g-(2+\e) g^2=(2+\e)(
2a_\e g- g^2) =(2+\e)(a_\e^2 -(a_\e-g)^2).$$
Hence, the substitution~$h:=(g/a_\e)-1$ in~\eqref{9sudhvgsdwe}
gives that
\begin{equation*}
\frac1{\sqrt{2(2+\e)}}\int \frac{dh}{\sqrt{
1 -h^2
}}=
\pm \frac{1}{\sqrt{1+\e}}\int d\th+C ,\end{equation*}
and so
\begin{equation}\label{78:PEREQ}
\begin{split} \frac1{\sqrt{2(2+\e)}}\,
\arcsin\frac{g-a_\e}{a_\e}&=\frac1{\sqrt{2(2+\e)}}\,
\arcsin h\\&
=\pm \frac{1}{\sqrt{1+\e}}\int d\th+C\\&=
\pm \frac{1}{\sqrt{1+\e}}\,\th+C.
\end{split}\end{equation}
Then, evaluating~\eqref{78:PEREQ} at~$\theta:=0$
and using~\eqref{GG}, we obtain that
$$ \arcsin(-1)=\arcsin\frac{g(0)-a_\e}{a_\e}=
{\sqrt{2(2+\e)}}\,C.$$
Thus, defining
$$\omega_\e:=\pm \sqrt{\frac{2(2+\e)}{1+\e}},$$ 
we rewrite~\eqref{78:PEREQ}
as
\begin{equation}\label{THIS}
\arcsin\frac{g(\theta)-a_\e}{a_\e}=\omega_\e\theta+\arcsin(-1).\end{equation}
Since~$\partial\{v>0\}$ is smooth and~$v$ homogeneous,
formula~\eqref{GG}
says that~$\alpha=k\pi$, with~$k\in\{1,2\}$.
Evaluating~\eqref{THIS}
at~$\theta:=k\pi$ and~$\theta:=0$,
using that~$g(0)=g(k\pi)=0$ (in view of~\eqref{GG}),
we obtain that
\begin{eqnarray*}
0&=&
\frac{g(k\pi)-a_\e}{a_\e}-
\frac{g(0)-a_\e}{a_\e}\\&=&
\sin\left( \omega_\e\,k\pi+\arcsin(-1) \right)-
\sin \left(\arcsin(-1)\right) \\&=&
-\cos \left( \omega_\e\,k\pi\right)
+1
\end{eqnarray*}
and therefore~$\omega_\e\,k\pi\in 2\pi\Z$.
This gives that
$$ \pm k\,\sqrt{\frac{2(2+\e)}{1+\e}}\in2\Z,$$ and
so
$$ \sqrt{\frac{2(2+\e)}{1+\e}}\in\Z,$$
which, for small~$\e$, only holds when~$\e=0$.
\end{proof}

\begin{remark} {F}rom~\eqref{THIS},
one can also construct a homogeneous solution~$v\ge0$
of the obstacle problem~$\L v=1$ in~$\{v>0\}$, with~$\L$
as in~\eqref{pde-000} and~$a_{ij}$
as in~\eqref{aijdef},
whose free
boundary is a cone, namely, in polar coordinates, one can take~$v=v(r,\theta)=
r^2\,g(\theta)$, with
$$ g(\theta)=\left\{\begin{matrix}
a_\e\,\big(1
-\cos(\omega_\e\theta)\big)
& {\mbox{ if }}\theta\in\left( 0,\frac{2\pi}{\omega_\e}\right),\\
0&{\mbox{ otherwise,}}
\end{matrix}\right. $$
where~$a_\e:=\frac1{2(2+\e)}$ 
and~$\omega_\e:= \sqrt{\frac{2(2+\e)}{1+\e}}<2$
when~$\e>0$ (respectively,~$\omega_\e:= \sqrt{\frac{2(2+\e)}{1+\e}}>2$
when~$\e<0$),
see Figure~\ref{FF1}. Notice in particular, that the singular cone
of the free boundary can be either obtuse or acute, according to the cases~$\e>0$
and~$\e<0$.

\begin{figure}[h]
    \centering
    \includegraphics[width=6.8cm]{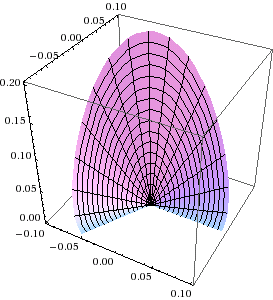}\qquad
    \includegraphics[width=6.8cm]{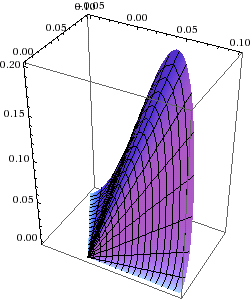}
    \caption{Examples of homogeneous solutions of the obstacle
problem with obtuse/acute singular free boundary.}
    \label{FF1}
\end{figure}

\end{remark}

Theorem~\ref{DISCO} says that this example is somehow
``typical'', namely if the free boundary of~\eqref{pde-000} meets 
the discontinuity points of the coefficients~$a_{ij}$ in a non-degenerate way,
then a singularity occurs. The proof of this fact is based on
Theorem~\ref{n2class}, and the details go as follows:

\begin{proof}[Proof of Theorem~\ref{DISCO}] Assume by contradiction that~$\partial\{v>0\}$
can be written as a differentiable graph near the origin:
say, up to a rotation, that $\{v>0\}$ coincides 
with~$\{x_2<\varphi(x_1)\}$
near the origin, with~$\varphi$ differentiable, $\varphi(0)=0$
and~$\varphi'(0)=0$.
We consider the blow-up sequence~$v_{r_k}$ as
in~\eqref{COMPA0} (with~$x_0=0$). {F}rom the discussion at the beginning
of Section~\ref{sec-Spruck}, we know that, for a suitable infinitesimal
sequence~$r_k$, it holds that~$v_{r_k}$ approaches
a global solution~$v_0$.
Near the origin, we have that~$\partial\{v_{r_k}>0\}$
coincides with~$\left\{x_2<\frac{\varphi(r_k x_1)}{r_k}\right\}$.
Using this and the fact that~$\varphi(r_k x_1)=o(r_k x_1)$,
we thus obtain that~$\partial\{v_{0}>0\}$
near the origin coincides with~$\left\{x_2<0\right\}$.
Also, from
Theorem~\ref{thm-2D}, we know that~$v_0$ is homogeneous of degree~$2$.
These considerations and Theorem~\ref{n2class}
imply that~$\e=0$, against our assumptions.
\end{proof}


\end{document}